\documentclass[11pt]{article}
\usepackage[margin=1.15in]{geometry}
\usepackage[T1]{fontenc}
\usepackage{lmodern}
\usepackage{microtype}
\usepackage{amsmath,amssymb,amsthm,mathtools}
\usepackage{hyperref}
\hypersetup{
  hidelinks,
  pdftitle={The Derived Algebra of Nonlinear Polynomial Divergence-Free Vector Fields},
  pdfauthor={Chao Ma},
  pdfcreator={Chao Ma},
  pdfsubject={The derived algebra of the nonlinear divergence-free polynomial vector fields in every degree and over every field, with the graded abelianization in positive characteristic as a rational GL(V)-module},
  pdfkeywords={divergence-free vector fields, derived algebra, abelianization, Lie algebras of Cartan type, special Lie algebra, Cartier operator, Cartier isomorphism, de Rham cohomology, Koszul complex, positive characteristic, Frobenius twist, rational GL(V)-module, Hilbert series, minimal generators}
}
\usepackage[nameinlink,noabbrev]{cleveref}
\usepackage[numbers,sort&compress]{natbib}
\numberwithin{equation}{section}
\usepackage{aliascnt}
\newtheorem{theorem}{Theorem}[section]
\newaliascnt{proposition}{theorem}
\newtheorem{proposition}[proposition]{Proposition}
\aliascntresetthe{proposition}
\newaliascnt{lemma}{theorem}
\newtheorem{lemma}[lemma]{Lemma}
\aliascntresetthe{lemma}
\newaliascnt{corollary}{theorem}
\newtheorem{corollary}[corollary]{Corollary}
\aliascntresetthe{corollary}
\theoremstyle{definition}
\newaliascnt{definition}{theorem}

\aliascntresetthe{definition}
\theoremstyle{remark}
\newaliascnt{remark}{theorem}
\newtheorem{remark}[remark]{Remark}
\aliascntresetthe{remark}
\theoremstyle{plain}

\newtheorem{mainthm}{Theorem}

\crefname{mainthm}{Theorem}{Theorems}
\crefname{section}{Section}{Sections}
\crefname{subsection}{Section}{Sections}
\Crefname{mainthm}{Theorem}{Theorems}
\crefname{theorem}{Theorem}{Theorems}
\Crefname{theorem}{Theorem}{Theorems}
\crefname{proposition}{Proposition}{Propositions}
\Crefname{proposition}{Proposition}{Propositions}
\crefname{lemma}{Lemma}{Lemmas}
\Crefname{lemma}{Lemma}{Lemmas}
\crefname{corollary}{Corollary}{Corollaries}
\Crefname{corollary}{Corollary}{Corollaries}
\crefname{definition}{Definition}{Definitions}
\Crefname{definition}{Definition}{Definitions}
\crefname{remark}{Remark}{Remarks}
\Crefname{remark}{Remark}{Remarks}

\newcommand{\Ld}{\mathsf L}
\newcommand{\Hex}{\mathsf E}
\DeclareMathOperator{\Sym}{Sym}
\newcommand{\dR}{\mathrm{dR}}
\newcommand{\GL}{\operatorname{GL}}
\newcommand{\SL}{\operatorname{SL}}

\title{The Derived Algebra of Nonlinear Polynomial\\
Divergence-Free Vector Fields}

\author{Chao Ma\thanks{Independent Researcher, London, United Kingdom.\\
ORCID: 0009-0004-2456-9098.  Email: \texttt{raymond.ma@me.com}.}}
\date{}

\begin{document}
\maketitle

\begin{abstract}
We study the Lie algebra $L_{\ge2}$ of divergence-free polynomial vector
fields on $k^n$, $n\ge3$, with coefficients of degree at least two, graded
by coefficient degree.  Over every field its derived algebra in degree
$d\ge3$ is the space of exact fields, those whose contraction with
the volume form is an exact form, and it is already spanned by brackets
with quadratic fields.  In characteristic zero this is the whole degree-$d$
component.  In characteristic $p>0$ the abelianization is nonzero above
degree two exactly in the degrees $d\ge(p-1)(n-1)$ with $d\equiv1-n\pmod p$,
and Cartier descent identifies it with a Frobenius twist of a rational
$\mathrm{GL}_n$-module, tensored with a power of the determinant.  For
$p\ge5$ each exact component is obtained from the previous one by bracketing
with quadratic fields.
\end{abstract}

\section{Introduction}

Let \(k\) be a field, let \(n\ge3\), and let \(S=k[x_1,\ldots,x_n]\).  The
polynomial vector fields \(D=\sum_i f_i\partial_i\) with
\(\operatorname{div}D=\sum_i\partial_if_i=0\) form a Lie algebra, graded by
the degree of the coefficients.  Write \(\Ld_d\) for its component of degree
\(d\) and
\[
\Ld_{\ge2}=\bigoplus_{d\ge2}\Ld_d
\]
for the subalgebra of fields whose coefficients vanish to order two at the
origin.  In characteristic zero the divergence-free fields form one of the
four classical series of Lie algebras of Cartan type; in characteristic
\(p\) their divided-power truncations give the special algebras of the
classification of simple modular Lie algebras \cite{Strade2004}.  The
subalgebra \(\Ld_{\ge2}\) is also the associated graded Lie algebra of the
congruence filtration on the group of volume-preserving formal
automorphisms of \(k[[x_1,\ldots,x_n]]\) that are tangent to the identity.
The results below are the Lie-theoretic input for the study of this group; for the finite-dimensional algebra \(S(n;\mathbf1)^{(1)}\) the
corresponding group is the unipotent radical of its automorphism group
\citep{MaUnipotent}.

Contraction
with the volume form \(\operatorname{vol}=dx_1\wedge\cdots\wedge dx_n\)
identifies \(\Ld_d\) with the closed \((n-1)\)-forms of coefficient degree
\(d\).  Let \(\Hex_d\subseteq\Ld_d\) be the subspace of exact fields, those \(D\)
for which \(\iota_D\operatorname{vol}\) is exact.

Lichnerowicz studied the Lie algebra of infinitesimal automorphisms of a
unimodular structure \citep{Lichnerowicz1974}, and Donzelli extended
Anders\'en--Lempert theory to Lie algebras of zero-divergence vector fields
on complex affine varieties \citep{Donzelli2021}.  In the smooth setting,
Janssens, Ryvkin and Vizman studied the Lie algebra of exact
divergence-free vector fields and its universal central extension
\citep{JanssensRyvkinVizman2025}.  For polynomial fields, Leites and
Poletaeva gave defining relations and minimal generators
\citep{LeitesPoletaeva1997}, and Bavula determined the automorphism group
\citep{Bavula2017}; generators of simple Lie algebras in positive
characteristic were studied by Bois \citep{Bois2009,Bois2010}.  In positive
characteristic Cartier's operator computes the de Rham cohomology of the
polynomial ring \citep{Cartier1957,Illusie1979,DeligneIllusie1987}, and
J\k{e}drzejewicz gave a Frobenius-residue decomposition of divergence-free
polynomial derivations \citep{Jedrzejewicz2019}.  Here we show that the exact fields are
spanned by brackets with quadratic fields, and so determine the derived
algebra of \(\Ld_{\ge2}\).

\begin{mainthm}[\cref{p1:thm:main}]
\label{p1:thm:A}
For every field \(k\) and every \(d\ge3\),
\[
[\Ld_2,\Ld_{d-1}]=\Hex_d .
\]
In particular \([\Ld_{\ge2},\Ld_{\ge2}]=\bigoplus_{d\ge3}\Hex_d\).
\end{mainthm}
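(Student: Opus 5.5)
The inclusion \([\Ld_2,\Ld_{d-1}]\subseteq\Hex_d\) should follow from Cartan calculus. For divergence-free \(D,E\) the forms \(\iota_D\operatorname{vol}\) and \(\iota_E\operatorname{vol}\) are closed, and \(L_D\operatorname{vol}=0\). Hence
\[
\iota_{[D,E]}\operatorname{vol}=L_D\iota_E\operatorname{vol}-\iota_E L_D\operatorname{vol}=d\,\iota_D\iota_E\operatorname{vol}+\iota_D\,d\,\iota_E\operatorname{vol}=d\bigl(\iota_D\iota_E\operatorname{vol}\bigr),
\]
so every bracket is exact. This holds over any field, because it is an identity of polynomial forms. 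Since \([\Ld_a,\Ld_b]\subseteq\Ld_{a+b-1}\) and \(a,b\ge2\), all brackets in \(\Ld_{\ge2}\) land in degrees \(\ge3\). So the ``in particular'' reduces to the first equality together with \([\Ld_2,\Ld_{d-1}]\subseteq[\Ld_{\ge2},\Ld_{\ge2}]\).

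For the reverse inclusion, \(\Hex_d\) is spanned by the images \(d(f\,dx_I)\) of monomial \((n-2)\)-forms, where \(|I|=n-2\), \(I\) is the complement of a pair \(\{c,j\}\), and \(\deg f=d+1\ge4\). By the identity above, it suffices to show that each such \(d(f\,dx_I)\) lies in the span of the forms \(d(\iota_D\iota_E\operatorname{vol})\) with \(D\in\Ld_2\), \(E\in\Ld_{d-1}\). I will first use monomial fields.

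\emph{Monomial fields.} The field \(D=x_ax_b\partial_c\) is divergence-free when \(c\notin\{a,b\}\), and \(E=x^m\partial_j\) is divergence-free when \(m_j=0\). In that case \(\iota_D\iota_E\operatorname{vol}=\pm x_ax_bx^m\,dx_{I}\). Thus \(f\,dx_I\) itself is attained whenever \(f=x_ax_bg\) with \(a,b\ne c\) and \(g\) free of \(x_j\), or the same with the roles of \(c\) and \(j\) exchanged. This covers every monomial \(f\) except those of the form \(x_c^\alpha x_j^\beta\cdot(\text{at most one other variable to the first power})\), together with a few similar small cases.

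\emph{Bad monomials.} The main obstacle is these remaining monomials, for which I need genuinely nonmonomial fields. Here I use that \(d(f\,dx_I)\) only matters modulo closed \((n-2)\)-forms. First I would use \(D=x_c^2\partial_k\) or \(x_cx_j\partial_k\) with \(k\notin\{c,j\}\), which exists because \(n\ge3\). I would pair these with the divergence-free fields \(E=\partial_kh\,\partial_j-\partial_jh\,\partial_k\) and \(E=\partial_kh\,\partial_c-\partial_ch\,\partial_k\) for suitable monomials \(h\). The contractions then produce forms of the shape \(x_c^\alpha x_j^\beta\,dx_{I'}\) with a different index set \(I'\), plus correction terms that are already covered by the monomial case. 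Next I would add the relation \(d\bigl(d(h\,dx_J)\bigr)=0\) for \((n-3)\)-forms \(h\,dx_J\). This relation expresses \(d(x_c^\alpha x_j^\beta\,dx_I)\) as a combination of \(d\) applied to forms with other index sets, which the previous step handles. The care needed is with coefficients such as \(\alpha\) or \(\beta\) that may vanish mod \(p\). I would split into cases according to which exponents are nonzero mod \(p\), using that \(\deg f\ge4\) forces some exponent \(\ge2\). When every derivative coefficient vanishes, \(f\,dx_I\) is itself closed and so contributes nothing.
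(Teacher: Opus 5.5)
Your Cartan computation for $[\Ld_2,\Ld_{d-1}]\subseteq\Hex_d$ is correct and matches the paper, and so is the degree bookkeeping for the ``in particular''. The gap is in the reverse inclusion.

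First, the monomial step covers much less than you claim. In $[x_ax_b\partial_c,x^\mu\partial_j]$ with $\mu_j=0$, the potential $x_ax_bx^\mu\,dx_I$ has $x_j$-exponent at most $2$; with the roles exchanged, the $x_c$-exponent is at most $2$. So $x_c^3x_j^3x_k^2$, for example, is bad. Let $\theta$ be the total multidegree of the potential, counting $dx_i$ like $x_i$. Weight counting shows that no monomial bracket lands at any $\theta$ with every $\theta_i\ge3$. The ``bad'' monomials are therefore the generic case, and the whole proof rests on your second step, which is only sketched.

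That second step provably fails on an infinite family of weights in every positive characteristic. Take $D=x_c^2\partial_k$ (or $x_cx_j\partial_k$) and $E=\partial_kh\,\partial_j-\partial_jh\,\partial_k$ (or the $c$-version). Since $\iota_{\partial_k}\iota_{\partial_k}=0$, the $\partial_k$-component of $E$ drops out, and $\iota_D\iota_E\operatorname{vol}=\pm x_c^2\,\partial_kh\,dx_{I'}$. If $h=x^\nu$ and the result has multidegree $\theta$, then $\nu_k=\theta_k+1$. So the bracket is zero whenever $\theta_k\equiv-1\pmod p$.

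Now take $\theta$ with every $\theta_i\equiv-1\pmod p$ and every $\theta_i\ge3$. Examples are $\theta=(3,3,3)$ for $p=2$ (so $d=7$) and $\theta=(4,4,4)$ for $p=5$ (so $d=10$). These are the weights of the paper's exceptional covector $\lambda=-\sigma$. Every bracket of the kinds you propose vanishes there or cannot land there. Yet the exact weight space is $\ker\lambda$, of dimension $n-1$. Your escape clause does not apply either: $d(f\,dx_I)\neq0$ there, because its derivative coefficients are $\equiv-1$. The relations coming from $d\circ d=0$ only relate targets to one another; they cannot produce a target from nothing.

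What is missing is a computation using the full divergence-free space at the predecessor weight, or genuinely non-monomial quadratic fields. The paper uses the anchors $D_r(a)=\sum_i a_ix_rx_i\partial_i$ with $\sigma(a)+a_r=0$. It shows that bracketing with any divergence-free predecessor $Y_v(b)$ is the Koszul contraction $\lambda(a)b-\lambda(b)a$. It then needs dedicated arguments exactly at $\lambda=-\sigma$, where the subspaces $K_r\cap\ker\lambda$ span $\ker\lambda$, and, in characteristic two, at boundary weights with $\lambda=\sigma$. Your closing sentence about splitting into cases by which exponents vanish mod $p$ is where all of this would have to happen. With the brackets you list, it cannot succeed at these weights.
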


In characteristic zero every closed polynomial form of positive degree is
exact, so \(\Hex_d=\Ld_d\) and the derived algebra is
\(\bigoplus_{d\ge3}\Ld_d\).  In characteristic \(p\) the quotients
\(\Ld_d/\Hex_d\) are de Rham cohomology groups.  Put
\[
d_r=(p-1)(n-1)+pr,
\qquad
W_r=\Sym^r(V^*)\otimes V,
\qquad V=k^n,
\]
and write \(W_r^{(1)}\) for the Frobenius pullback of \(W_r\).

\begin{mainthm}[\cref{p1:thm:module,p1:cor:cartier-coefficient-module}]
\label{p1:thm:B}
Let \(\operatorname{char}k=p>0\) and \(d\ge2\).  Then \(\Ld_d/\Hex_d=0\)
unless \(d=d_r\) for some \(r\ge0\), and
\[
\Ld_{d_r}/\Hex_{d_r}\cong(\det V^*)^{\otimes(p-1)}\otimes W_r^{(1)}
\]
as rational \(\GL(V)\)-modules.
\end{mainthm}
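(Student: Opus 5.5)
Under the isomorphism $D\mapsto\iota_D\operatorname{vol}$, the quotient $\Ld_d/\Hex_d$ becomes the degree-$d$ part of $H^{n-1}_{\dR}(S)$, where an $(n-1)$-form has coefficient degree $d$ and hence total weight $d+n-1$ (each $dx_i$ counting one). I will compute this cohomology using the Cartier isomorphism, keeping track of the $\GL(V)$-action throughout.

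The Cartier isomorphism gives $C^{-1}\colon\Omega^{j}_{S^{(1)}}\xrightarrow{\sim}H^j_{\dR}(S)$, sending $f\,dx_I$ to the class of $f^p(x_1\cdots x_n)^{p-1}\ldots$. More precisely it is the map $x_i\mapsto x_i^p$ and $dx_i\mapsto x_i^{p-1}dx_i$, extended multiplicatively. It multiplies total weight by $p$, counting each $dx_i$ as weight one. It is also $\GL(V)$-equivariant once the source is given the Frobenius-twisted action. I will check equivariance intrinsically rather than in coordinates: $C^{-1}$ is the canonical map $F^*\Omega^1\to H^1$, $dg\mapsto[g^{p-1}dg]$, and it is multiplicative, so it is natural in linear changes of coordinates.

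The degree bookkeeping is then direct. Classes in $H^{n-1}$ come from $\Omega^{n-1}_{S}$ with polynomial coefficients of degree $r$, which have weight $r+n-1$. These land in total weight $p(r+n-1)$, so the coefficient degree is
\[
d=p(r+n-1)-(n-1)=(p-1)(n-1)+pr=d_r.
\]
Every other degree has zero cohomology, which gives the vanishing statement. For $d=2$ this is consistent: $d_0=(p-1)(n-1)\ge 2(p-1)\ge2$, with equality only if $p=2$ and $n=2$, and that case is excluded since $n\ge3$.

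It remains to identify the $\GL(V)$-module. The space $\Omega^{n-1}_S$ in coefficient degree $r$ is $\Sym^r(V^*)\otimes\Lambda^{n-1}V^*$, where $x_i\in V^*$. Since $\Lambda^{n-1}V^*\cong V\otimes\det V^*$, this is $W_r\otimes\det V^*$. Applying Frobenius twist gives $W_r^{(1)}\otimes(\det V^*)^{\otimes p}$.

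Finally, we must transport back from forms to vector fields. The map $D\mapsto\iota_D\operatorname{vol}$ is equivariant only up to a determinant: as $\GL(V)$-modules we have $\Ld_d\otimes\det V^*\cong\Omega^{n-1}$ in the appropriate degree. Hence
\[
\Ld_{d_r}/\Hex_{d_r}\cong W_r^{(1)}\otimes(\det V^*)^{\otimes p}\otimes(\det V^*)^{-1}=(\det V^*)^{\otimes(p-1)}\otimes W_r^{(1)}.
\]

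The main obstacle is getting the determinant and Frobenius conventions exactly right. This requires an intrinsic description of $C^{-1}$ on top-minus-one forms, showing it commutes with the $\GL(V)$-action on $S$ (which, being linear, preserves $S^p$). It also requires a correct accounting of the $\det$ factor coming from $\operatorname{vol}$, where a sign or dual slip would flip $V$ and $V^*$ or shift the power of $\det$. I would verify all of this on the diagonal torus by computing weights, and on elementary matrices directly.
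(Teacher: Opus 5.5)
Your proposal is correct and is essentially the paper's proof. Contraction identifies \((\Ld_d/\Hex_d)\otimes\det V^*\) with \(H^{n-1}_{\dR}(S)_{d+n-1}\). The inverse Cartier isomorphism, natural for linear coordinate changes with the Frobenius-pulled-back action on the source, gives the support condition \(d+n-1=p(r+n-1)\) and the module \(F_{G/k}^*\bigl(\Sym^r((V^{(1)})^*)\otimes\Lambda^{n-1}((V^{(1)})^*)\bigr)\). Then \(\Lambda^{n-1}U^*\cong U\otimes\det U^*\) and \(F_{G/k}^*\det((V^{(1)})^*)\cong(\det V^*)^{\otimes p}\) give \((\det V^*)^{\otimes(p-1)}\otimes W_r^{(1)}\).

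One aside is wrong, although it does not affect your argument, which is uniform in \(d\). The equality \((p-1)(n-1)=2\) holds exactly when \((n,p)=(3,2)\), and that case is not excluded by \(n\ge3\). There \(d_0=2\) and \(\Ld_2/\Hex_2\cong\det V^*\otimes V^{(1)}\) is three-dimensional; for example \(xy\,\partial_z\) is not exact, since its weight has \(\lambda=0\). The paper relies on this in its Hilbert series corollary.
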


Together, the two theorems give the graded abelianization of
\(\Ld_{\ge2}\): it is \(\Ld_2\) in degree two and the Cartier modules of
\cref{p1:thm:B} above it.  Its Hilbert series and the minimal number of
homogeneous generators follow in
\cref{p1:cor:hilbert,p1:cor:sharp}.  For \(p\ge5\) the exact part is
generated one degree at a time from inside the derived algebra.

\begin{mainthm}[\cref{p1:thm:restricted-source-exactness}]
\label{p1:thm:C}
If \(\operatorname{char}k=p\ge5\), then \(\Hex_d=[\Ld_2,\Hex_{d-1}]\) for
every \(d\ge3\).
\end{mainthm}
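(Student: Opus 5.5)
The inclusion \([\Ld_2,\Hex_{d-1}]\subseteq\Hex_d\) holds because \(\Hex_{d-1}\subseteq\Ld_{d-1}\) and \([\Ld_2,\Ld_{d-1}]=\Hex_d\) by \cref{p1:thm:A}. For the reverse inclusion, \cref{p1:thm:A} gives \(\Hex_d=[\Ld_2,\Ld_{d-1}]\). So it is enough to show that \([\Ld_2,D]\subseteq[\Ld_2,\Hex_{d-1}]\) for \(D\) running over a set of representatives of \(\Ld_{d-1}/\Hex_{d-1}\).

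By \cref{p1:thm:B} this quotient vanishes unless \(d-1=d_r=(p-1)(n-1)+pr\). For \(p\ge5\) and \(n\ge3\) we have \(d_0\ge8\), so in particular \(\Hex_2=\Ld_2\). Hence the statement is immediate except in the degrees \(d=d_r+1\), and the whole proof reduces to the following claim: for every \(Q\in\Ld_2\) and every \(D\in\Ld_{d_r}\),
\[
[Q,D]\in[\Ld_2,\Hex_{d_r}].
\]

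The obstruction is the bilinear map
\[
\Ld_2\otimes\bigl(\Ld_{d_r}/\Hex_{d_r}\bigr)\longrightarrow \Hex_{d_r+1}/[\Ld_2,\Hex_{d_r}],\qquad Q\otimes\bar D\mapsto [Q,D].
\]
It is well defined and \(\GL(V)\)-equivariant. I will show it vanishes on spanning sets.

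For the second factor I use the Cartier description behind \cref{p1:thm:B}. Up to exact forms, a class is represented by \(\iota_D\operatorname{vol}=C^{-1}(\eta)\), where \(\eta\) is a monomial \((n-1)\)-form. Concretely,
\[
\iota_D\operatorname{vol}=g^p\Bigl(\prod_{j}x_j\Bigr)^{p-1}x_i^{\,1-p}\,x_i^{p-1}\cdots\,dx_{\hat\imath}
\]
in the standard normalization, that is, \(D\) is a torus-weight vector of the form \(x^{(p-1)\mathbf 1+p\alpha}\) times a coordinate field, suitably adjusted. For the first factor, \(\Ld_2\) is spanned by the weight vectors
\[
x^\beta\partial_i\ (\beta_i=0),\qquad x_jx_i\partial_i-\tfrac12x_j x_l\partial_l\text{-type combinations},\qquad\text{etc.}
\]
By equivariance under the torus and under permutations of coordinates, it suffices to treat finitely many patterns of \((Q,D)\), distinguished by how the support of \(Q\) meets the index \(i\) and the exponent vector of \(D\).

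For each pattern I will write \([Q,D]\) explicitly as \(\sum_k[Q_k,E_k]\) with \(Q_k\in\Ld_2\) and \(E_k\in\Hex_{d_r}\). The key identity is
\[
\iota_{[Q,D]}\operatorname{vol}=\mathcal L_Q\iota_D\operatorname{vol}=d\,\iota_Q\iota_D\operatorname{vol},
\]
so \([Q,D]\) has the explicit primitive \(\iota_Q\iota_D\operatorname{vol}\), an \((n-2)\)-form of degree \(d_r+2\). The plan is to match this primitive with \(\iota_{Q'}\iota_{E}\operatorname{vol}+d(\cdots)\), where \(E\) is obtained from \(D\) by moving one factor \(x_j\) (or \(x_j^p\)) between coefficients. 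Such an \(E\) is exact because its \(x\)-exponents no longer have the shape \(\equiv-1\pmod p\) in every coordinate that Cartier requires of a non-exact class. The matching uses Jacobi-type relations \([Q_1,[Q_2,X]]-[Q_2,[Q_1,X]]=[[Q_1,Q_2],X]\) with \(X\in\Ld_{d_r-1}=\Hex_{d_r-1}\) to move between representatives.

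I expect this explicit case analysis to be the main obstacle. Solving for the correction terms requires dividing by small integers coming from exponent differences such as \(1,2,3\) (e.g.\ the coefficient \(\tfrac12\) in the trace-free quadratic fields and the factors produced by differentiating \(x_j^{2}\) and \(x_j^{3}\)). This is exactly where \(p\ge5\) should enter, and I would check \(p=2,3\) against the same patterns to confirm that the hypothesis is needed.
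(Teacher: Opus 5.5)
\paragraph{Verdict.} Your reduction is sound, but the proposal stops before the actual proof. By \cref{p1:thm:A,p1:thm:B}, and since \(d_0=(p-1)(n-1)\ge8\), the statement is immediate unless \(d-1=d_r\). There it is equivalent to \([\Ld_2,\Ld_{d_r}]\subseteq[\Ld_2,\Hex_{d_r}]\), and your obstruction map is well defined and equivariant.

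\paragraph{The gap.} That inclusion is the entire content of the theorem, and the proposal does not prove it. The promised case analysis is never carried out: no bracket identity is written down, the Cartier representative you display is not correct as written, and the role of \(p\ge5\) is only anticipated.

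The Jacobi step cannot do the work either. Changing \(D\) within its class alters \([Q,D]\) only by elements of \([\Ld_2,\Hex_{d_r}]\); that is exactly why your obstruction map is well defined, so moving between representatives never removes the obstruction. The relation \([Q_1,[Q_2,X]]-[Q_2,[Q_1,X]]=[[Q_1,Q_2],X]\) only shows \([\Ld_3,\Ld_{d_r-1}]\subseteq[\Ld_2,\Hex_{d_r}]\), which says nothing about \([Q,D]\) with \(D\) non-exact.

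What is needed is a spanning statement at every target weight \(w'\) of \(\Hex_{d_r+1}\), and it is not automatic. If \(w'=w+e_r\) with \(\lambda_w=0\), the \(r\)-th anchor is precisely the one whose predecessor weight is non-exact. At boundary weights with \(w'_r=0\), the proof of \cref{p1:thm:main} uses exactly that anchor (the isomorphism \(T_r\)).

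\paragraph{How the paper does it.} The paper proves the inclusion by direct computation, in every degree and without your reduction. \(\Hex_d\) is spanned by the fields \(D_{ij}(f)\) with monomial \(f=x^Ay^Bz^CM\). The identity \([D_{xy}(x^ay^b),D_{xy}(x^{A-a+1}y^{B-b+1}z^CM)]=\bigl(b(A+1)-a(B+1)\bigr)D_{xy}(f)\) for \((a,b)=(2,1),(1,2)\) settles every case except \(3(A+1)=3(B+1)=0\); this is where \(p\ne3\) is used. The boundary case \(A=0,B=1\) and the residue class \(A\equiv B\equiv-1\pmod p\) are handled by further explicit brackets with denominators \(2\) and \(C-2\).

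\paragraph{A completion within your framework.} Your remark that moving a factor makes \(E\) exact is the right ingredient for a shorter argument. At any weight \(v\) with \(\lambda_v\ne0\), every divergence-free field is exact. So the anchor argument in the proof of \cref{p1:thm:main} applies verbatim, except for an anchor \(r\) with \(\lambda_{w'}=\varepsilon_r\).

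In that exceptional case, every coordinate \(w'_t\ge0\) with \(t\ne r\) satisfies \(w'_t\equiv-1\pmod p\), hence \(w'_t\ge p-1\). For such \(t\), the \(t\)-anchor has an exact predecessor and spans the exact locus at \(w'\):
the hyperplanes \(\ker(\varepsilon_r-\varepsilon_t)\) and \(K_t\) are distinct because \(n\ge3\). If \(w'\) has a coordinate equal to \(-1\), use instead that \(\varepsilon_r\) does not vanish on \(K_t\).

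One of these two arguments has to be supplied for the proposal to become a proof.
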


The proof of \cref{p1:thm:A} is weightwise.  At a fixed multidegree the de
Rham complex of \(S\) is a Koszul complex \((\Lambda^\bullet V^*,
\lambda\wedge-)\) for a covector \(\lambda\) determined by the multidegree.
On the corresponding weight space, the bracket with a suitable quadratic
field acts as the Koszul contraction by the same \(\lambda\).  Generation
then reduces to linear algebra with two hyperplanes.  Separate arguments are
needed only for \(\lambda=-\sigma\), where \(\sigma=x_1+\cdots+x_n\), and,
in characteristic two, for \(\lambda=\sigma\) at boundary weights.  In low degrees, Theorem~2.69 of Benkart,
Gregory and Premet \citep{BenkartGregoryPremet2009} gives a second proof
(\cref{p1:rem:bgp}).

\Cref{foundation-divergence-free-fields} sets up the dictionary with
differential forms and describes the closed and exact loci weight by
weight.  \Cref{foundation-koszul-reduction} proves \cref{p1:thm:A}.
\Cref{foundation-cartier-abelianization} proves \cref{p1:thm:B,p1:thm:C} and
treats the case \(n=3\).

\section*{Conventions}
Throughout, \(k\) is an arbitrary field, \(n\ge3\), \(V=k^n\), and
\[
S=k[x_1,\ldots,x_n]=\operatorname{Sym}(V^*).
\]
\section{Divergence-free fields and exact forms}
\label{foundation-divergence-free-fields}
Fix the coordinate volume form
\[
\operatorname{vol}=dx_1\wedge\cdots\wedge dx_n.
\]
For \(d\ge0\), write
\[
\Ld_d
=
\left\{
D=\sum_{i=1}^n f_i\partial_i\in S_d\otimes V:
\sum_{i=1}^n\partial_i f_i=0
\right\}.
\]
Thus \(\Ld_d\) is the space of homogeneous divergence-free polynomial
vector fields of coefficient degree \(d\).

\subsection{The divergence-free/de Rham dictionary}
\label{p1:sec:derham-dictionary}

Contraction with the fixed volume form gives a linear isomorphism
\[
S_d\otimes V
\xrightarrow{\ \sim\ }
\Omega^{n-1}_d(S),
\qquad
D\longmapsto \iota_D\operatorname{vol},
\]
where the subscript denotes polynomial coefficient degree.

\begin{proposition}[Contraction and divergence]
\label{p1:prop:contraction-divergence}
For every polynomial vector field \(D\),
\[
d\bigl(\iota_D\operatorname{vol}\bigr)
=
(\operatorname{div}D)\operatorname{vol}.
\]
Consequently, contraction identifies \(\Ld_d\) with the closed
\((n-1)\)-forms of coefficient degree \(d\).
\end{proposition}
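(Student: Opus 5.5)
Since both sides of the identity are linear in \(D\), I will reduce to \(D=f\partial_i\) with \(f\in S\) and a single index \(i\). The contraction \(\iota_{\partial_i}\operatorname{vol}\) is the form
\[
\omega_i=(-1)^{i-1}dx_1\wedge\cdots\wedge\widehat{dx_i}\wedge\cdots\wedge dx_n ,
\]
so \(\iota_D\operatorname{vol}=f\,\omega_i\). This form \(\omega_i\) has constant coefficients and is closed. Then \(d(f\omega_i)=df\wedge\omega_i\). Here \(df=\sum_j\partial_jf\,dx_j\), and every term with \(j\ne i\) contains a repeated \(dx_j\), so it vanishes. This leaves \(\partial_if\,dx_i\wedge\omega_i\).

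The only thing to check is the sign identity \(dx_i\wedge\omega_i=\operatorname{vol}\). To see it, move \(dx_i\) past the \(i-1\) one-forms \(dx_1,\dots,dx_{i-1}\). This costs a factor \((-1)^{i-1}\), which cancels the sign built into \(\omega_i\). Equivalently, \(dx_i\wedge\iota_{\partial_i}\operatorname{vol}=\operatorname{vol}\) follows from the derivation property of \(\iota_{\partial_i}\) applied to \(dx_i\wedge\operatorname{vol}=0\). Summing over \(i\) gives \(d(\iota_D\operatorname{vol})=\sum_i\partial_if_i\,\operatorname{vol}=(\operatorname{div}D)\operatorname{vol}\).

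For the consequence, recall that contraction is a linear isomorphism \(S_d\otimes V\to\Omega^{n-1}_d(S)\). This holds because the forms \(\omega_1,\dots,\omega_n\) form an \(S\)-basis of \(\Omega^{n-1}(S)\), and contraction sends \(\partial_i\mapsto\omega_i\). Since \(\operatorname{vol}\) is a basis of \(\Omega^n(S)\), the form \((\operatorname{div}D)\operatorname{vol}\) vanishes exactly when \(\operatorname{div}D=0\). So \(D\in\Ld_d\) if and only if \(\iota_D\operatorname{vol}\) is closed, and the isomorphism restricts to a bijection from \(\Ld_d\) onto the closed \((n-1)\)-forms of coefficient degree \(d\).

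Nothing here depends on the characteristic, because only the Leibniz rule and the formal derivatives are used. The only step needing care is the sign bookkeeping in \(dx_i\wedge\omega_i=\operatorname{vol}\), and that is routine.
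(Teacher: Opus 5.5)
Your proof is correct, but it takes a different route from the paper's. The paper argues through Cartan calculus. Since $d\operatorname{vol}=0$, it has $d(\iota_D\operatorname{vol})=\mathcal L_D\operatorname{vol}$, and it then evaluates $\mathcal L_D\operatorname{vol}$ with the Leibniz rule and $\mathcal L_D\,dx_j=d(Dx_j)=df_j$, so that each slot of the wedge contributes $\partial_jf_j\operatorname{vol}$. You instead reduce by linearity to $D=f\partial_i$ and compute $d(f\omega_i)=df\wedge\omega_i=\partial_if\,dx_i\wedge\omega_i$ directly. Your only inputs are the Leibniz rule for $d$, the fact that $\omega_i$ has constant coefficients, and a sign check. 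This makes your version more elementary. It does not need the identity $\mathcal L_D=d\iota_D+\iota_Dd$, nor the fact that $\mathcal L_D$ acts as a derivation commuting with $d$ on the algebraic de Rham complex over an arbitrary field. It also proves the ``consequently'' part explicitly, via the $S$-basis $\omega_1,\dots,\omega_n$ of $\Omega^{n-1}(S)$ and the fact that $\operatorname{vol}$ freely generates $\Omega^n(S)$; the paper leaves this to the contraction isomorphism stated just before the proposition. The paper's version is more conceptual, and its first step is coordinate-free. It also introduces the Cartan identities the paper reuses later, in the Euler homotopy and in the identity $\iota_{[D,E]}\operatorname{vol}=d(\iota_D\iota_E\operatorname{vol})$ showing that brackets are exact.
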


\begin{proof}
Cartan's formula gives
\[
\mathcal L_D\operatorname{vol}
=d\bigl(\iota_D\operatorname{vol}\bigr)
+\iota_D(d\operatorname{vol})
=d\bigl(\iota_D\operatorname{vol}\bigr),
\]
since \(d\operatorname{vol}=0\).  Since \(\mathcal L_D dx_j=d(Dx_j)=df_j\), the Leibniz rule applied to the volume form gives
\[
\mathcal L_D\operatorname{vol}
=\left(\sum_{j=1}^n\partial_j f_j\right)\operatorname{vol}
=(\operatorname{div}D)\operatorname{vol}.
\]
\end{proof}

Define the exact subspace
\[
\Hex_d
=
\left\{
D\in\Ld_d:
\iota_D\operatorname{vol}\text{ is exact}
\right\}.
\]
Equivalently, \(\Hex_d\) is the inverse image of
\(d\Omega^{n-2}_{d+1}(S)\subseteq\Omega^{n-1}_d(S)\).

\begin{proposition}[Euler homotopy]
\label{p1:prop:euler-homotopy}
Let \(\omega\in\Omega^{n-1}_d(S)\) be closed.  If the scalar
\(d+n-1\in k\) is nonzero, then \(\omega\) is exact.  Hence
\[
\Ld_d=\Hex_d
\qquad
\text{whenever }d+n-1\ne0\text{ in }k.
\]
\end{proposition}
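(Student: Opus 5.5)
We use the Euler vector field \(E=\sum_i x_i\partial_i\) and Cartan's homotopy formula, exactly as in the proof of \cref{p1:prop:contraction-divergence}. For any form \(\omega\) we have
\[
\mathcal L_E\omega=d(\iota_E\omega)+\iota_E(d\omega).
\]

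The first step is to compute \(\mathcal L_E\) on a homogeneous form. The operator \(\mathcal L_E\) is a derivation of the algebra of forms. On a coefficient of degree \(d\) it acts as the scalar \(d\), because Euler's identity \(E(f)=\deg(f)\,f\) holds for monomials over any field. On each \(dx_j\) it acts by \(\mathcal L_E dx_j=d(Ex_j)=dx_j\). Hence on a form \(f\,dx_{j_1}\wedge\cdots\wedge dx_{j_q}\) with \(f\in S_d\), the operator \(\mathcal L_E\) is multiplication by the scalar \(d+q\). Taking \(q=n-1\), every \(\omega\in\Omega^{n-1}_d(S)\) satisfies \(\mathcal L_E\omega=(d+n-1)\omega\).

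The second step uses closedness. If \(\omega\) is closed, the homotopy formula reduces to \((d+n-1)\omega=d(\iota_E\omega)\). When \(d+n-1\neq0\) in \(k\) we may divide, so
\[
\omega=d\Bigl(\tfrac{1}{d+n-1}\,\iota_E\omega\Bigr).
\]
The form \(\iota_E\omega\) is a polynomial \((n-2)\)-form of coefficient degree \(d+1\), so \(\omega\) is exact and lies in \(d\Omega^{n-2}_{d+1}(S)\).

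For the consequence, take \(D\in\Ld_d\). By \cref{p1:prop:contraction-divergence} the form \(\iota_D\operatorname{vol}\) is closed, so by the second step it is exact, and therefore \(D\in\Hex_d\). The reverse inclusion \(\Hex_d\subseteq\Ld_d\) holds by definition.

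There is no real obstacle here. The only point that needs care is to confirm that the Euler identity and the Leibniz computation are purely algebraic and hold in every characteristic. The field characteristic enters the argument only through the scalar \(d+n-1\), which is exactly where the hypothesis is used.
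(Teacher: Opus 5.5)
Your proof is correct and is essentially the paper's argument: the Euler field $\mathcal E$ has eigenvalue $d+n-1$ on $\Omega^{n-1}_d(S)$, and Cartan's homotopy formula applied to a closed form gives $(d+n-1)\omega=d(\iota_{\mathcal E}\omega)$, after which you divide by $d+n-1$. You also spell out two points the paper leaves implicit: that the eigenvalue computation holds in every characteristic, and that $\iota_{\mathcal E}\omega$ lies in $\Omega^{n-2}_{d+1}(S)$.
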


\begin{proof}
Let
\[
\mathcal E=\sum_{i=1}^n x_i\partial_i
\]
be the Euler vector field.  A form of coefficient degree \(d\) and form
degree \(n-1\) has total Euler degree \(d+n-1\), so
\[
\mathcal L_{\mathcal E}\omega=(d+n-1)\omega.
\]
Cartan's homotopy formula and \(d\omega=0\) therefore give
\[
(d+n-1)\omega
=
\bigl(d\iota_{\mathcal E}+\iota_{\mathcal E}d\bigr)\omega
=
d\bigl(\iota_{\mathcal E}\omega\bigr).
\]
Division by \(d+n-1\) proves the claim.
\end{proof}

Let \(G=\GL(V)\)
act on \(S\) by
\[
(g\cdot f)(v)=f(g^{-1}v)
\]
and on polynomial vector fields by push-forward.  Since
\[
g\cdot\operatorname{vol}=\det(g)^{-1}\operatorname{vol},
\]
naturality of contraction gives
\[
g\cdot(\iota_D\operatorname{vol})
=\iota_{g\cdot D}(g\cdot\operatorname{vol})
=\det(g)^{-1}\iota_{g\cdot D}\operatorname{vol}.
\]
Thus contraction is \(G\)-equivariant after tensoring the vector-field
side with \(\det(V^*)\).

\subsection{Homogeneous weight spaces}
\label{p1:sec:setup}

We use the coordinate multigrading.  For
\(w=(w_1,\ldots,w_n)\in\mathbb Z^n\) and \(1\le i\le n\), put
\[
X_i(w)=x^{w+e_i}\partial_i
\]
whenever every exponent in \(w+e_i\) is nonnegative.  Here \(e_i\) denotes
the \(i\)-th coordinate vector of \(\mathbb Z^n\).

Two conventions govern this multigrading.

First, if \(T\subseteq G\) is the diagonal torus and
\(t=\operatorname{diag}(t_1,\ldots,t_n)\), then
\[
t\cdot X_i(w)
=
\left(\prod_{j=1}^n t_j^{-w_j}\right)X_i(w).
\]
Thus the torus character is \(-w\); we call \(w\) the \emph{weight index}, or weight, of \(X_i(w)\).

Second, whenever an integer tuple
\(m=(m_1,\ldots,m_n)\in\mathbb Z^n\) is paired with a vector in \(V\), or
is regarded as a covector, its coordinates are reduced through
\(\mathbb Z\to k\).  In particular,
\[
\lambda=\lambda_w
:=
\sum_{i=1}^n \overline{w_i+1}\,x_i
\in V^*,
\]
where \(\overline{w_i+1}\) is the image of \(w_i+1\) in \(k\).  Thus, in
characteristic \(p>0\), the equation \(\lambda=0\) means
\[
w_i\equiv-1\pmod p
\qquad(1\le i\le n).
\]

A nonzero weight-indexed subspace of \(S_d\otimes V\) has
\[
\sum_{i=1}^n w_i=d-1.
\]
Moreover, either every \(w_i\) is nonnegative, or exactly one coordinate,
say \(w_s\), equals \(-1\).  Indeed, a basis monomial
\(x^\alpha\partial_i\) has index \(w=\alpha-e_i\); hence no coordinate of
\(w\) can be smaller than \(-1\), and two negative coordinates would make
every \(X_i(w)\) undefined.  If \(w_s=-1\), the weight-indexed subspace is
the line \(kX_s(w)\).

Suppose first that \(w\) is nonnegative.  The coefficient-vector map
\[
V\longrightarrow (S_d\otimes V)[w],
\qquad
a=(a_1,\ldots,a_n)
\longmapsto
\sum_{i=1}^n a_iX_i(w),
\]
is an isomorphism.  Direct differentiation gives
\[
\operatorname{div}
\left(\sum_{i=1}^n a_iX_i(w)\right)
=
(\lambda(a))x^w.
\]
Hence, under this coefficient-vector identification, the closed locus is
\begin{equation}
\label{p1:eq:closed-weight}
L(w)=
\begin{cases}
\ker\lambda,&\lambda\ne0,\\
V,&\lambda=0.
\end{cases}
\end{equation}

The exact locus is governed by the same covector.  Fix the total multidegree
\[
m=w+\mathbf1\in\mathbb Z^n.
\]
For \(I\subseteq\{1,\ldots,n\}\), write \(\mathbf1_I\) for its incidence
vector and \(dx_I\) for the corresponding ordered wedge.  Because
\(w_i\ge0\), every basis element
\[
x^{m-\mathbf1_I}dx_I
\]
is defined.  The fixed-multidegree summand of the de Rham complex is
therefore identified with \(\Lambda^\bullet V^*\), and under this
identification the differential is
\[
\lambda\wedge-.
\]
If \(\lambda\ne0\), choose \(q\in V\) with \(\lambda(q)=1\).  Then
\[
\iota_q(\lambda\wedge-)
+\lambda\wedge\iota_q
=\operatorname{id}_{\Lambda^\bullet V^*},
\]
so the weight complex is acyclic.  If \(\lambda=0\), its differential is
zero.  Under contraction with \(\operatorname{vol}\), the exact locus is
therefore
\begin{equation}
\label{p1:eq:exact-weight}
E(w)=
\begin{cases}
\ker\lambda,&\lambda\ne0,\\
0,&\lambda=0.
\end{cases}
\end{equation}

For the boundary case, suppose \(w_s=-1\).  Then the
weight-indexed vector-field space is \(kX_s(w)\), and its basis field is
automatically divergence-free because its coefficient is independent of
\(x_s\).  The total multidegree \(m=w+\mathbf1\) has \(m_s=0\).  Hence the
fixed-multidegree de Rham complex contains only wedges generated by
\[
U_s^*=\operatorname{span}\{dx_i:i\ne s\},
\]
and is the Koszul complex
\[
\bigl(\Lambda^\bullet U_s^*,\lambda\wedge-\bigr).
\]
Since the \(x_s\)-coordinate of \(\lambda\) is zero, this complex is
acyclic exactly when \(\lambda\ne0\).  Its top-degree line, corresponding
under contraction to \(kX_s(w)\), is therefore exact if and only if
\(\lambda\ne0\).

\section{Quadratic brackets and the Koszul reduction}
\label{foundation-koszul-reduction}

\begin{theorem}[Quadratic generation of the exact subspace]
\label{p1:thm:main}
Let \(k\) be any field and let \(n\ge3\).  Then, for every \(d\ge3\),
\[
[\Ld_2,\Ld_{d-1}]=\Hex_d.
\]
\end{theorem}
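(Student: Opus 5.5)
We prove the two inclusions separately. For \([\Ld_2,\Ld_{d-1}]\subseteq\Hex_d\) we use the standard identity for forms. If \(D,D'\) are divergence-free, then \(\mathcal L_D\operatorname{vol}=0\), so
\[
\iota_{[D,D']}\operatorname{vol}
=\mathcal L_D\iota_{D'}\operatorname{vol}-\iota_{D'}\mathcal L_D\operatorname{vol}
=\mathcal L_D\iota_{D'}\operatorname{vol}.
\]
Since \(\iota_{D'}\operatorname{vol}\) is closed by \cref{p1:prop:contraction-divergence}, Cartan's formula gives
\[
\mathcal L_D\iota_{D'}\operatorname{vol}=d\bigl(\iota_D\iota_{D'}\operatorname{vol}\bigr),
\]
which is exact. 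This inclusion holds over any field and in every degree.

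For the reverse inclusion we work weightwise. The bracket preserves the torus weight (additively in \(w\)), and \(\Hex_d\) is a sum of weight spaces, so it suffices to show that each \(E(w)\) from \eqref{p1:eq:exact-weight}, and each exact boundary line \(kX_s(w)\), lies in \([\Ld_2,\Ld_{d-1}]\). Fix a nonnegative \(w\) with \(\sum w_i=d-1\) and \(\lambda=\lambda_w\ne0\); when \(\lambda=0\) there is nothing to prove. We choose a weight \(u\) of a quadratic field, so \(\sum u_i=1\), for instance \(u=e_j\) or \(u=e_j+e_l-e_s\), and set \(w'=w-u\). The bracket \(\Ld_2[u]\times\Ld_{d-1}[w']\to(S_d\otimes V)[w]\) is then computed in coefficient vectors. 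For a fixed \(X\in\Ld_2[u]\), one has \([X,\sum a_iX_i(w')]=\sum_i(\text{linear in }a)X_i(w)\). The aim is to recognize this map, restricted to the closed locus \(L(w')=\ker\lambda_{w'}\), as a Koszul-type map. Concretely, for \(X=x_j^2\partial_l\)-type fields we expect the bracket to be \(a\mapsto\lambda_{w'}(\cdot)\)-contraction composed with a rank-one correction. The image should then be all of \(\ker\lambda_w\) whenever we can pick \(u\) so that \(\lambda_{w'}\) and \(\lambda_w\) define distinct hyperplanes, or so that two different choices of \(u\) produce images spanning \(\ker\lambda_w\). This is the ``linear algebra with two hyperplanes'': the image of each bracket map is a subspace of \(\ker\lambda_w\), typically of codimension at most one, and two such images in general position span it. Since \(n\ge3\), there is enough room to choose \(u\) with \(w'\) admissible (at most one coordinate \(-1\)).

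Next we treat the boundary case. For \(w_s=-1\) with \(\lambda\ne0\), we need \(x^{w+e_s}\partial_s\) as a bracket. Since \(\lambda\) has a nonzero coordinate \(i\ne s\), we bracket \(x_i x_?\partial_s\)-type quadratic fields with a suitable field in degree \(d-1\), reducing again to a scalar \(\overline{w_i+1}\) or similar being nonzero.

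The main obstacle is the degenerate covectors, where every choice of \(u\) gives \(\lambda_{w'}\) proportional to \(\lambda_w\), or the source weight space is not closed. Since \(\lambda_{w-u}=\lambda_w-\bar u\), degeneracy occurs when \(\lambda_w\) is parallel to all the available shifts, which singles out \(\lambda=-\sigma\); for \(p=2\) there is also the case \(\lambda=\sigma\) at weights with some \(w_i=0\). For \(\lambda=-\sigma\), meaning all \(w_i\equiv-2\), we would first try explicit brackets of \(x_ix_j\partial_l\) with boundary fields \(X_s(w')\) that have \(w'_s=-1\). Those source fields are automatically closed, and we would compute directly that their images together with the generic contributions span \(\ker\sigma\). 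For \(p=2\) we would do a direct hand computation at the few boundary weights, using \(n\ge3\) to choose a third index.
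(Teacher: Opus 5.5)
Your inclusion $[\Ld_2,\Ld_{d-1}]\subseteq\Hex_d$ is correct and is the paper's argument. The reverse inclusion has a genuine gap: the mechanism you propose is not the right one, and the criterion you state is false. You fix one quadratic field and compare the hyperplanes $\ker\lambda_{w'}$ and $\ker\lambda_w$.

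The missing idea is to let the quadratic factor run over the whole weight space $\Ld_2[e_r]=\{D_r(a)=\sum_i a_ix_rx_i\partial_i : a\in K_r\}$, where $K_r=\ker(\sigma+\varepsilon_r)$, and to use \emph{both} divergence conditions. Then the bracket of $D_r(a)$ with $\sum_i b_iX_i(w-e_r)$, for $b\in N_r=\ker(\lambda-\varepsilon_r)$, has coefficient vector $\lambda(a)b-\lambda(b)a$. This is the Koszul contraction of $a\wedge b$ by the \emph{target} covector $\lambda=\lambda_w$. The contraction maps $\Lambda^2V$ onto $\ker\lambda$, and mixed wedges from two distinct hyperplanes span $\Lambda^2V$. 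So the relevant pair of hyperplanes is $K_r$ and $N_r$, and the only obstruction is $\lambda-\varepsilon_r\in k^\times(\sigma+\varepsilon_r)$; two such indices force $\lambda=-\sigma$.

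Your test gives the wrong answer exactly there. For $\lambda=-\sigma$ and $u=e_r$, the covectors $-\sigma-\varepsilon_r$ and $-\sigma$ have distinct kernels. Yet every bracket of weight $e_r$ lies in $K_r\cap\ker\lambda$, a hyperplane of $\ker\lambda$, because $N_r=K_r$. Conversely, ``$\lambda_w$ parallel to all shifts $\bar u$'' with $u=e_1,\dots,e_n$ would force $\lambda=0$, so your analysis does not actually single out $-\sigma$.

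Your plans for the exceptional cases also fail or are missing.

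\textbf{The case $\lambda=-\sigma$.} At a nonnegative weight with $\operatorname{char}k\ne2$, this forces $\operatorname{char}k=p$ odd and $w_i\equiv-2\pmod p$, so all $w_i\ge1$. A source $w-u$ with $s$-coordinate $-1$ needs $u_s=w_s+1\ge2$, hence $u=2e_s-e_l$ and $w_s=1$, which forces $p=3$. So apart from $p=3$ with some $w_s=1$, the boundary source fields you want to use do not exist. The paper instead notes that $\sigma+\varepsilon_r$ is not proportional to $\sigma$ and picks $q_r\in K_r$ with $\lambda(q_r)=1$. This gives all of $K_r\cap\ker\lambda$ from the $r$-th weight space. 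These subspaces span $\ker\sigma$, since $e_i-e_j\in K_r\cap\ker\lambda$ whenever $r\notin\{i,j\}$; this is where $n\ge3$ is used.

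\textbf{Boundary weights with some $w_r=0$.} You omit this case. Here the $r$-predecessor has a one-dimensional source, and the bracket becomes $a\mapsto\lambda(a)e_r-a$ on $K_r$. This is an isomorphism onto $\ker\lambda$ precisely because $e_r\notin K_r$ when $2\ne0$.

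\textbf{Characteristic two.} The map above kills $e_r$, and the bad configuration $\lambda=\sigma$ occurs at infinitely many weights, not a few. One needs an actual argument: the images $\ker\sigma\cap\ker\varepsilon_r$ and $\ker\sigma\cap\ker\varepsilon_s$ of the $r$- and $s$-weight spaces together span $\ker\lambda$ by a dimension count.

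\textbf{The case $w_s=-1$.} Your sketch is plausible but not computed. The paper brackets the $r$-anchor ($r\ne s$, $w_r>0$) with $X_s(w-e_r)$ to get $\lambda(a)X_s(w)$. It uses $\lambda|_{K_r}\ne0$, which holds because $\lambda_s=0$ while $(\sigma+\varepsilon_r)_s=1$.
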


\subsection{Quadratic anchors and the raw bracket}

Write
\[
\varepsilon_r:=x_r\in V^*,
\qquad
\sigma:=\varepsilon_1+\cdots+\varepsilon_n\in V^*.
\]

For \(1\le r\le n\) and \(a=(a_1,\ldots,a_n)\in V\), define
\[
D_r(a)
:=
\sum_{i=1}^n a_i x_r x_i\partial_i
\in S_2\otimes V.
\]
Its weight index is \(e_r\).  Direct differentiation gives
\[
\operatorname{div}D_r(a)
=
\bigl(\sigma(a)+a_r\bigr)x_r.
\]
Hence, with
\[
\mu_r:=\sigma+\varepsilon_r\in V^*,
\qquad
K_r:=\ker\mu_r,
\]
we have
\[
D_r(a)\in\Ld_2
\quad\Longleftrightarrow\quad
 a\in K_r.
\]
We call the fields \(D_r(a)\), \(a\in K_r\), the \emph{\(r\)-th anchor}.

Fix a target weight index \(w\), and let
\[
\lambda:=\lambda_w\in V^*
\]
be the field-valued covector defined in Section~\ref{foundation-divergence-free-fields}.
Choose an index \(r\) such that
\[
v:=w-e_r
\]
is again a weight index occurring in \(S_{d-1}\otimes V\).  For a coefficient vector \(b\in V\), write
\[
Y_v(b)=\sum_i b_iX_i(v),
\]
where the sum runs over the indices for which \(X_i(v)\) is defined and all
other coordinates of \(b\) are understood to be zero.  Thus all coordinates
are available when \(v\) is nonnegative, whereas if \(v_s=-1\), the only
possible coefficient direction is \(b\in k e_s\).

\begin{proposition}[Raw quadratic bracket]
\label{p1:prop:raw-quadratic-bracket}
With the notation above,
\[
[D_r(a),Y_v(b)]
=
\sum_i c_iX_i(w),
\qquad
c=(a\cdot v)b-b_ra,
\]
where
\[
a\cdot v:=\sum_i \overline{v_i}\,a_i\in k.
\]
If both inputs are divergence-free, then
\begin{equation}
\label{p1:eq:reduced-bracket}
c
=
\lambda(a)b-\lambda(b)a.
\end{equation}
\end{proposition}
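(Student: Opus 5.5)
The plan is to compute the bracket on the monomial basis and then use the two divergence conditions to rewrite the scalars \(a\cdot v\) and \(b_r\).

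\textbf{Monomial bracket.} First I will record a general bracket of two weight-indexed basis fields. For weight indices \(u,v\), expanding \([x^{u+e_i}\partial_i,x^{v+e_j}\partial_j]\) by the Leibniz rule gives
\[
[X_i(u),X_j(v)]=(\overline{v_i}+\delta_{ij})X_j(u+v)-(\overline{u_j}+\delta_{ij})X_i(u+v).
\]
A term whose exponent would be negative comes with a vanishing scalar prefactor. Concretely, if \(v_s=-1\) then only \(j=s\) occurs, and the would-be term \(X_i(u+v)\) with \(i\neq s\) is accompanied by \(x_s\)-differentiation of something independent of \(x_s\). I will check this boundary bookkeeping explicitly. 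I expect it to be the only delicate point, since everything else is a one-line expansion.

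\textbf{Specialising to the anchor.} Now take \(u=e_r\), so that \(D_r(a)=\sum_i a_iX_i(e_r)\), and \(Y_v(b)=\sum_j b_jX_j(v)\), with \(w=e_r+v\). Summing bilinearly, the first term contributes \(\sum_j b_j\bigl(a\cdot v+a_j\bigr)X_j(w)\). In the second term \(u_j+\delta_{ij}=\delta_{rj}+\delta_{ij}\), so it contributes \(\sum_i a_i(b_r+b_i)X_i(w)\). In the coefficient of \(X_k(w)\), the two \(a_kb_k\) terms cancel, and what remains is \((a\cdot v)b_k-b_ra_k\). This gives \(c=(a\cdot v)b-b_ra\).

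\textbf{Reduced form.} For the second formula I will use the two divergence conditions.
\begin{itemize}
\item Since \(\lambda_w=\lambda_v+\varepsilon_r\) and \(\lambda_v(a)=a\cdot v+\sigma(a)\), we get \(a\cdot v=\lambda_w(a)-a_r-\sigma(a)\). The condition \(a\in K_r\), i.e.\ \(\sigma(a)=-a_r\), then gives \(a\cdot v=\lambda(a)\).
\item Similarly \(\lambda(b)=\lambda_v(b)+b_r\). The divergence computation of Section~\ref{p1:sec:setup} gives \(\operatorname{div}Y_v(b)=\lambda_v(b)x^v\), so divergence-freeness of \(Y_v(b)\) forces \(\lambda_v(b)=0\), hence \(b_r=\lambda(b)\).
\item In the boundary case \(v_s=-1\), the field is automatically divergence-free, and the identity \(\lambda_v(b)=\overline{v_s+1}\,b_s=0\) holds trivially.
\end{itemize}
Substituting both identities into \(c=(a\cdot v)b-b_ra\) yields \eqref{p1:eq:reduced-bracket}.
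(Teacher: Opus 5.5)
Your proposal is correct and follows the paper's proof. You compute the bracket on the weight basis to get $c=(a\cdot v)b-b_ra$, then use $\mu_r(a)=0$ to obtain $a\cdot v=\lambda(a)$ and $\lambda_v=\lambda-\varepsilon_r$ to obtain $b_r=\lambda(b)$. The only differences are cosmetic: you pass through a general formula for $[X_i(u),X_j(v)]$ before specialising, and you spell out the boundary case $v_s=-1$ in both steps, which the paper leaves implicit.
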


\begin{proof}
For the \(i\)-th component of the bracket, differentiation gives
\[
D_r(a)\bigl(b_i x^{v+e_i}\bigr)
=
 b_i(a\cdot v+a_i)x^{w+e_i},
\]
whereas
\[
Y_v(b)\bigl(a_i x_rx_i\bigr)
=
 a_i(b_r+b_i)x^{w+e_i}.
\]
Their difference is therefore
\[
c_i=(a\cdot v)b_i-b_ra_i.
\]

Now suppose that both inputs are divergence-free.  The first condition is
\(\mu_r(a)=0\), so
\[
\sigma(a)+a_r=0.
\]
Since \(v=w-e_r\), reduction of the integer coordinates to \(k\) gives
\[
a\cdot v
=
\lambda(a)-\sigma(a)-a_r
=
\lambda(a).
\]
The second input has covector
\[
\lambda_v=\lambda-\varepsilon_r,
\]
so its divergence-free condition is
\[
(\lambda-\varepsilon_r)(b)=0,
\]
which is equivalent to \(b_r=\lambda(b)\).  Substitution gives
\eqref{p1:eq:reduced-bracket}.
\end{proof}

Equation~\eqref{p1:eq:reduced-bracket} is the Koszul contraction of
\(a\wedge b\) by \(\lambda\).  More precisely, define
\[
\Phi_\lambda:\Lambda^2V\longrightarrow V,
\qquad
\Phi_\lambda(a\wedge b)
=
\lambda(a)b-\lambda(b)a.
\]
Then \(\lambda\Phi_\lambda=0\), so \(\Phi_\lambda\) takes values in
\(\ker\lambda\), and it is the degree-two differential in the contraction
Koszul strand
\[
\Lambda^2V
\xrightarrow{\ \iota_\lambda\ }
V
\xrightarrow{\ \lambda\ }
k.
\]

\subsection{The linear algebra of one anchor}

\begin{lemma}[Koszul surjectivity]
\label{p1:lem:onto}
If \(\lambda\ne0\), then
\[
\Phi_\lambda:\Lambda^2V\twoheadrightarrow\ker\lambda.
\]
\end{lemma}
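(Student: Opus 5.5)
The plan is to write down an explicit preimage for every element of \(\ker\lambda\). Since \(\lambda\ne0\), choose \(q\in V\) with \(\lambda(q)=1\), exactly as in the acyclicity argument of Section~\ref{foundation-divergence-free-fields}. Given any \(c\in\ker\lambda\), I propose the element \(q\wedge c\in\Lambda^2V\) as the preimage. By the definition of \(\Phi_\lambda\),
\[
\Phi_\lambda(q\wedge c)=\lambda(q)c-\lambda(c)q=c-0=c .
\]
Hence every vector of \(\ker\lambda\) lies in the image of \(\Phi_\lambda\).

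The reverse inclusion is the identity \(\lambda\Phi_\lambda=0\) already noted after \cref{p1:prop:raw-quadratic-bracket}. Explicitly,
\[
\lambda\bigl(\lambda(a)b-\lambda(b)a\bigr)=\lambda(a)\lambda(b)-\lambda(b)\lambda(a)=0 .
\]
So \(\Phi_\lambda\) takes values in \(\ker\lambda\) and surjects onto it.

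Equivalently, this is the homotopy identity \(\iota_\lambda(q\wedge-)+q\wedge\iota_\lambda=\operatorname{id}\) on \(\Lambda^\bullet V\), restricted to degree one. It says the contraction Koszul complex is exact at \(V\) when \(\lambda\ne0\).

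There is no real obstacle in this step. It holds over any field, and it does not use \(n\ge3\).

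The later difficulty, which this lemma only prepares for, is different. The anchor constraint forces \(a\in K_r\), and in the boundary weights \(b\) is constrained as well. So one must show that the wedges \(a\wedge b\) with \(a\in K_r\) and \(b\) in the divergence-free locus still span enough of \(\Lambda^2V\). That is the two-hyperplane linear algebra the introduction refers to, and it is not part of this lemma.
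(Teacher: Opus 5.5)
Your proof is correct and is exactly the paper's argument: choose \(q\in V\) with \(\lambda(q)=1\) and observe that \(\Phi_\lambda(q\wedge c)=c\) for every \(c\in\ker\lambda\). The containment of the image in \(\ker\lambda\), which you verify, is the identity \(\lambda\Phi_\lambda=0\) that the paper records just before the lemma.
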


\begin{proof}
Choose \(q\in V\) with \(\lambda(q)=1\).  For every
\(c\in\ker\lambda\),
\[
\Phi_\lambda(q\wedge c)=c.
\]
\end{proof}

\begin{lemma}[Mixed wedges of distinct hyperplanes]
\label{p1:lem:hyperplanes}
If \(K,N\subset V\) are distinct hyperplanes, then
\[
\operatorname{span}
\{a\wedge b:a\in K,\ b\in N\}
=
\Lambda^2V.
\]
\end{lemma}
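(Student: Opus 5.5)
We prove the lemma with an explicit basis adapted to the two hyperplanes.

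Since \(K\ne N\) are hyperplanes, \(K\cap N\) has codimension two, so \(\dim(K\cap N)=n-2\). Choose a basis \(u_1,\ldots,u_{n-2}\) of \(K\cap N\). Choose \(k_0\in K\setminus N\) and \(n_0\in N\setminus K\). Then \(K=(K\cap N)\oplus k\,k_0\) and \(N=(K\cap N)\oplus k\,n_0\). We claim that \(u_1,\ldots,u_{n-2},k_0,n_0\) is a basis of \(V\). Indeed, \(K+N=V\), because \(K+N\) strictly contains the hyperplane \(K\). The space \(K+N\) is spanned by the \(n\) vectors \(u_1,\ldots,u_{n-2},k_0,n_0\), so these vectors span \(V\) and hence form a basis.

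It then suffices to exhibit every wedge of two distinct basis vectors as a product \(a\wedge b\) with \(a\in K\) and \(b\in N\). We check the possible pairs in turn.

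\begin{itemize}
\item For \(u_i\wedge u_j\), take \(a=u_i\in K\) and \(b=u_j\in N\).
\item For \(u_i\wedge n_0\), take \(a=u_i\in K\) and \(b=n_0\in N\).
\item For \(k_0\wedge u_i\), take \(a=k_0\in K\) and \(b=u_i\in N\). Hence \(u_i\wedge k_0=-k_0\wedge u_i\) also lies in the span.
\item For \(k_0\wedge n_0\), take \(a=k_0\in K\) and \(b=n_0\in N\).
\end{itemize}

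The wedges of distinct basis vectors span \(\Lambda^2V\), so the span of the mixed wedges contains \(\Lambda^2V\). The reverse inclusion is trivial, which gives the equality.

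The only point needing care is the dimension count, namely \(K+N=V\) and \(\dim(K\cap N)=n-2\). No step is a genuine obstacle. The argument is characteristic-free and uses neither \(n\ge3\) nor the specific hyperplanes \(K_r\) that will appear later.
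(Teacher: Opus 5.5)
Your proof is correct and follows the same route as the paper: both take \(Z=K\cap N\) of codimension two, pick one vector in \(K\setminus N\) and one in \(N\setminus K\) to complete a basis, and check that \(\Lambda^2Z\), \(k_0\wedge Z\), \(Z\wedge n_0\) and \(k(k_0\wedge n_0)\) consist of mixed wedges and together span \(\Lambda^2V\). Your explicit check that \(K+N=V\), so that these vectors form a basis, is a detail the paper leaves implicit.
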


\begin{proof}
Let \(Z=K\cap N\).  Since \(K\ne N\), the intersection has codimension two.
Choose
\[
a_0\in K\setminus N,
\qquad
b_0\in N\setminus K.
\]
Then
\[
V=Z\oplus ka_0\oplus kb_0,
\]
and the allowed mixed wedges contain
\[
\Lambda^2Z,
\qquad
a_0\wedge Z,
\qquad
Z\wedge b_0,
\qquad
k(a_0\wedge b_0).
\]
These four summands span \(\Lambda^2V\).
\end{proof}

For a target covector \(\lambda\), the predecessor attached to the
\(r\)-th anchor has divergence-free coefficient space
\[
N_r:=\ker(\lambda-\varepsilon_r).
\]
If \(\lambda-\varepsilon_r=0\), then \(N_r=V\).  Otherwise \(N_r\) is a
hyperplane.

\begin{lemma}[Hyperplane coincidence]
\label{p1:lem:hyperplane-coincidence}
Suppose \(N_r\) is a hyperplane.  Then
\[
N_r=K_r
\]
if and only if there is a scalar \(c_r\in k^\times\) such that
\begin{equation}
\label{p1:eq:hyperplane-coincidence}
\lambda-\varepsilon_r=c_r\mu_r.
\end{equation}
If \eqref{p1:eq:hyperplane-coincidence} holds for two distinct indices
\(r\ne s\), then
\[
c_r=c_s=-1
\]
and
\begin{equation}
\label{p1:eq:simultaneous-exceptional-lambda}
\lambda=-\sigma.
\end{equation}
\end{lemma}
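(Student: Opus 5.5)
The first assertion follows from the standard fact that two nonzero linear functionals have the same kernel exactly when they are proportional. Here \(\lambda-\varepsilon_r\) is nonzero by hypothesis, since \(N_r\) is a hyperplane. The functional \(\mu_r=\sigma+\varepsilon_r\) has coordinates \(\mu_r(e_i)=1\) for \(i\ne r\) and \(\mu_r(e_r)=2\). Because \(n\ge3\), some coordinate with \(i\ne r\) equals \(1\), so \(\mu_r\ne0\) in every characteristic, including characteristic two. Hence \(K_r\) is also a hyperplane. Then \(N_r=K_r\) if and only if \(\lambda-\varepsilon_r=c_r\mu_r\) for some scalar \(c_r\), and \(c_r\ne0\) because the left-hand side is nonzero.

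For the second assertion, suppose \eqref{p1:eq:hyperplane-coincidence} holds for indices \(r\ne s\). Subtracting the two relations eliminates \(\lambda\):
\[
\varepsilon_s-\varepsilon_r=(c_r-c_s)\sigma+c_r\varepsilon_r-c_s\varepsilon_s .
\]
We then compare coordinates using the basis \(e_1,\ldots,e_n\).

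\begin{itemize}
\item Since \(n\ge3\), there is an index \(t\notin\{r,s\}\). Evaluating at \(e_t\) gives \(0=c_r-c_s\), so \(c_r=c_s=:c\).
\item Evaluating at \(e_r\) gives \(-1=(c_r-c_s)+c_r=c\), so \(c=-1\).
\item Evaluating at \(e_s\) gives \(1=(c_r-c_s)-c_s=-c\), which is consistent.
\end{itemize}

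Finally, substitute \(c_r=-1\) back into \eqref{p1:eq:hyperplane-coincidence}:
\[
\lambda=\varepsilon_r-\mu_r=\varepsilon_r-\sigma-\varepsilon_r=-\sigma,
\]
which is \eqref{p1:eq:simultaneous-exceptional-lambda}.

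The only delicate point is characteristic-independence. The argument needs \(\mu_r\ne0\) and a third index \(t\) to force \(c_r=c_s\), and both come from \(n\ge3\). None of the steps divides by any scalar, so the argument is valid over every field.
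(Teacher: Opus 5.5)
Your proof is correct and follows essentially the same route as the paper. The first assertion comes from proportionality of nonzero covectors with equal kernels. For the second, you evaluate at $e_t$ (with $t\notin\{r,s\}$, available since $n\ge3$) and at $e_r$, which forces $c_r=c_s=-1$ and hence $\lambda=-\sigma$. The only cosmetic differences are that you subtract the two identities first to eliminate $\lambda$, and that you explicitly check $\mu_r\ne0$, which the paper leaves implicit.
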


\begin{proof}
Two nonzero covectors have the same kernel exactly when they are nonzero
scalar multiples of one another, which proves the first assertion.

Now suppose coincidence occurs for \(r\ne s\).  Since \(n\ge3\), choose a
coordinate \(t\notin\{r,s\}\).  Evaluating both coincidence identities on
\(e_t\in V\) gives
\[
c_r=c_s=:c.
\]
Evaluating the identity for the \(s\)-anchor on \(e_r\) gives
\(\lambda(e_r)=c\), while the identity for the \(r\)-anchor gives
\[
\lambda(e_r)-1=2c.
\]
Thus \(c-1=2c\), hence \(c=-1\).  Every coordinate of \(\lambda\) is then
\(-1\), that is, \(\lambda=-\sigma\).
\end{proof}

If \(N_r=V\), then \(K_r\wedge V\) spans \(\Lambda^2V\).  If \(N_r\) and
\(K_r\) are distinct hyperplanes, the same conclusion follows from
\cref{p1:lem:hyperplanes}.  In either case, the \(r\)-anchor realizes all
of \(\ker\lambda\) by \cref{p1:lem:onto}.  Only the coincident case
\(N_r=K_r\) needs separate treatment.

\subsection{Proof of quadratic generation}

Fix \(d\ge3\) and a target weight index \(w\) occurring in \(\Ld_d\).
Thus
\[
\sum_{i=1}^n w_i=d-1.
\]
Put \(\lambda=\lambda_w\).  If \(\lambda=0\), Section~\ref{foundation-divergence-free-fields}
shows that the exact subspace at \(w\) is zero.
Hence assume \(\lambda\ne0\).

\subsubsection{Interior weights}

Assume first that
\[
w_i\ge1
\qquad(1\le i\le n).
\]
Then for every anchor \(r\), the predecessor \(v=w-e_r\) is nonnegative,
and its divergence-free coefficient space is
\[
N_r=\ker(\lambda-\varepsilon_r).
\]
If \(N_r=V\) or \(N_r\ne K_r\), the preceding linear algebra shows that
brackets using the \(r\)-anchor span \(\ker\lambda\), which is the
exact subspace at \(w\).

Only coincident anchors remain.  By
\cref{p1:lem:hyperplane-coincidence}, if two distinct anchors are
coincident, then necessarily
\[
\lambda=-\sigma.
\]
Consequently, unless \(\lambda=-\sigma\), at most one anchor is
coincident.  Since \(n\ge3\), another anchor is available and already spans
\(\ker\lambda\).

Suppose now that
\[
\lambda=-\sigma.
\]
Then \(N_r=K_r\) for every \(r\).  The restriction
\(\lambda|_{K_r}\) is nonzero.  Otherwise \(K_r=\ker\mu_r\) and
\(\ker\lambda\) coincide, so \(\mu_r=\sigma+\varepsilon_r\) is proportional
to \(-\sigma\).  A coordinate away from \(r\) forces the scalar to be
\(-1\), and the \(r\)-th coordinate then gives \(2=1\).

Choose \(q_r\in K_r\) with \(\lambda(q_r)=1\).  For
\(c\in K_r\cap\ker\lambda\),
\[
\Phi_\lambda(q_r\wedge c)=c.
\]
Since \(\lambda=-\sigma\), the intersection is
\[
K_r\cap\ker\lambda
=
\left\{
 c\in V:
 \sum_i c_i=0,
 \ c_r=0
\right\}.
\]
These subspaces span \(\ker\lambda\).  Indeed, for \(i\ne j\), choose
\(r\notin\{i,j\}\); then
\[
e_i-e_j\in K_r\cap\ker\lambda,
\]
and the vectors \(e_i-e_j\) span the hyperplane
\(\sum_i c_i=0\).  Thus the
simultaneous exceptional configuration is generated as well.

\subsubsection{Nonnegative boundary weights}

Assume next that \(w\) is nonnegative and that
\[
w_r=0
\]
for some \(r\).  Then \(\lambda(e_r)=1\), while the predecessor
\(v=w-e_r\) has its unique negative coordinate at \(r\).  Its
coefficient space is therefore the line \(k\,e_r\).  Taking
\(b=e_r\) in \eqref{p1:eq:reduced-bracket} gives the linear map
\begin{equation}
\label{p1:eq:boundary-anchor-map}
T_r:K_r\longrightarrow\ker\lambda,
\qquad
T_r(a)=\lambda(a)e_r-a.
\end{equation}

Suppose first that \(\operatorname{char}k\ne2\).  If \(T_r(a)=0\), then
\(a\in k\,e_r\).  But
\[
\mu_r(e_r)=2\ne0,
\]
so \(e_r\notin K_r\) and hence \(a=0\).  Thus \(T_r\) is injective.
Both \(K_r\) and \(\ker\lambda\) have dimension \(n-1\), so \(T_r\) is
an isomorphism.

It remains to treat \(\operatorname{char}k=2\).  Since
\[
\sum_i w_i=d-1\ge2,
\]
there is an index \(s\ne r\) with \(w_s>0\).  The \(s\)-predecessor is
nonnegative and has coefficient space
\[
N_s=\ker(\lambda-\varepsilon_s).
\]
If \(N_s=V\) or \(N_s\ne K_s\), the generic anchor argument already spans
\(\ker\lambda\).  We may therefore assume \(N_s=K_s\), so
\[
\lambda-\varepsilon_s=c\mu_s
\]
for some \(c\ne0\).  Evaluating on \(e_r\) gives \(c=1\), because
\(\lambda(e_r)=1\).  Evaluating on \(e_s\) and using \(2=0\) then gives
\(\lambda(e_s)=1\); every remaining coordinate is also equal to \(1\).
Hence
\begin{equation}
\label{p1:eq:char2-boundary-lambda}
\lambda=\sigma
\qquad
(\operatorname{char}k=2).
\end{equation}

For \(u\in\{r,s\}\), put
\[
H_u
:=
\left\{
 c\in V:
 \sigma(c)=0,
 \ c_u=0
\right\}.
\]
Because \(\lambda=\sigma\), this is
\(H_u=\ker\lambda\cap\ker\varepsilon_u\).
For \(a\in K_r\), the relation \(\mu_r(a)=0\) reads
\[
\sigma(a)+a_r=0.
\]
In characteristic two this gives \(\lambda(a)=a_r\), so
\[
T_r(a)=a_re_r-a\in H_r.
\]
Moreover
\[
\ker T_r=k\,e_r
\]
because \(e_r\in K_r\) when \(2=0\).  Therefore
\[
\operatorname{im}T_r=H_r.
\]

For the \(s\)-anchor, the assumption \(N_s=K_s\) means that both
coefficient vectors range over \(K_s\).  Since \(e_s\in K_s\) and
\(\lambda(e_s)=1\),
\[
\Phi_\lambda(e_s\wedge c)=c
\qquad(c\in H_s),
\]
so \(H_s\) lies in the \(s\)-anchor image.  Conversely, for
\(a,b\in K_s\), the equality
\(\lambda(a)=a_s\) and its analogue for \(b\) show that the \(s\)-th
coordinate of \(\Phi_\lambda(a\wedge b)\) is zero.  Hence the
\(s\)-anchor image is exactly \(H_s\).

Finally,
\[
\dim H_r=\dim H_s=n-2,
\qquad
\dim(H_r\cap H_s)=n-3.
\]
Therefore
\[
H_r+H_s=\ker\lambda.
\]
This completes the characteristic-two boundary case.

\subsubsection{Weights with one negative coordinate}

Finally suppose that
\[
w_s=-1
\]
for a unique index \(s\).  The target weight space is the line
\[
kX_s(w).
\]
Since
\[
\sum_{j\ne s}w_j=d\ge3,
\]
there is an index \(r\ne s\) with \(w_r>0\).  The predecessor
\(v=w-e_r\) has the same unique negative coordinate \(s\), so its
coefficient space is the line \(k\,e_s\).  Equation
\eqref{p1:eq:reduced-bracket} gives
\[
\Phi_\lambda(a\wedge e_s)=\lambda(a)e_s
\qquad(a\in K_r),
\]
because \(\lambda(e_s)=0\).

The covectors \(\lambda\) and \(\mu_r\) are not proportional: their
\(s\)-coordinates are respectively \(0\) and \(1\), since \(r\ne s\).
Consequently \(\lambda|_{K_r}\ne0\).  Choosing
\(a\in K_r\) with \(\lambda(a)\ne0\) therefore produces a nonzero multiple
of \(e_s\), and the brackets span the target line.  By the boundary
calculation of Section~\ref{foundation-divergence-free-fields}, this line is
the exact subspace for \(\lambda\ne0\).

\subsubsection{Brackets are exact}

It remains to show that every bracket of divergence-free fields is exact.
Let \(D\) and \(E\) be divergence-free polynomial vector fields.  Since
\[
\mathcal L_D\operatorname{vol}=0,
\qquad
\mathcal L_E\operatorname{vol}=0,
\qquad
d\operatorname{vol}=0,
\]
the Cartan commutator identity gives
\[
\begin{aligned}
\iota_{[D,E]}\operatorname{vol}
&=
[\mathcal L_D,\iota_E]\operatorname{vol}\\
&=
\mathcal L_D(\iota_E\operatorname{vol})\\
&=
(d\iota_D+\iota_Dd)(\iota_E\operatorname{vol})\\
&=
d(\iota_D\iota_E\operatorname{vol}),
\end{aligned}
\]
because
\[
d(\iota_E\operatorname{vol})
=\mathcal L_E\operatorname{vol}=0.
\]
Thus
\[
[\Ld_2,\Ld_{d-1}]\subseteq\Hex_d.
\]
The three weight cases above give the opposite inclusion in every weight
index, and hence
\[
[\Ld_2,\Ld_{d-1}]=\Hex_d.
\]
This proves \cref{p1:thm:main}.

\begin{remark}[Low degrees]
\label{p1:rem:bgp}
Benkart, Gregory and Premet determined, for \(\ell\le p-2\), the
degree-\(\ell\) component of the divergence-free subalgebra of the
divided-power Witt--Jacobson algebra as a module over its degree-zero
component, which corresponds to \(\Ld_{\ell+1}\) here
\citep[Theorem~2.69]{BenkartGregoryPremet2009}: it is irreducible
off \(n+\ell\equiv0\pmod p\) and indecomposable at resonance.  In the range
\(2\le\ell\le p-2\) it is therefore generated by a highest-weight vector.  The passage from divided powers to polynomials is possible
because \(p\)-th powers are killed by all partial derivatives, and the
generator then becomes a commutator: for \(j\notin\{1,n\}\),
\[
 [x_1^2\partial_j,\,
  x_1^{\ell-1}x_j\partial_n]
 =x_1^{\ell+1}\partial_n.
\]
By the Jacobi identity \([\Ld_2,\Ld_{d-1}]\) is stable under \(\Ld_1\), and
\(\Hex_d=\Ld_d\) for \(d\le p-1\) by \cref{p1:cor:cartier-coefficient-module}.
Hence Theorem~2.69 gives \cref{p1:thm:main} in coefficient degrees
\(3\le d\le p-1\), including resonance.
\end{remark}

\section{Cartier abelianization and homogeneous generators}
\label{foundation-cartier-abelianization}

By the Cartan
calculation at the end of Section~\ref{foundation-koszul-reduction}, every
bracket of divergence-free fields is exact.  Since the least bracket degree
inside \(\Ld_{\ge2}\) is three, Theorem~\ref{p1:thm:main} gives
\[
[\Ld_{\ge2},\Ld_{\ge2}]_2=0,
\qquad
[\Ld_{\ge2},\Ld_{\ge2}]_d=\Hex_d
\quad(d\ge3).
\]
Consequently
\[
(\Ld_{\ge2}^{\mathrm{ab}})_2=\Ld_2,
\qquad
(\Ld_{\ge2}^{\mathrm{ab}})_d=\Ld_d/\Hex_d
\quad(d\ge3).
\]
The remaining quotient \(\Ld_d/\Hex_d\) is determined by de Rham
cohomology.  

Throughout this section, when \(\operatorname{char}k=p>0\), write
\[
G=\GL(V),
\qquad
V^{(1)}:=k\otimes_{F_k,k}V,
\qquad
G^{(1)}=\GL(V^{(1)}),
\]
and let
\[
F_{G/k}:G\longrightarrow G^{(1)}
\]
be the relative Frobenius morphism.  If \(M\) is a rational
\(G^{(1)}\)-module, then \(F_{G/k}^*M\) denotes restriction of its action
along \(F_{G/k}\).  This is the scheme-theoretic Frobenius pullback; over a
perfect field it is the usual first Frobenius twist.

\subsection{The Cartier module}
\label{p1:sec:module}

\begin{theorem}[Graded abelianization and Cartier module]
\label{p1:thm:module}
Assume \(\operatorname{char}k=p>0\).  For every \(d\ge2\), contraction with
\(\operatorname{vol}\) induces a natural isomorphism of rational
\(G=\GL(V)\)-modules
\[
(\Ld_d/\Hex_d)\otimes\det(V^*)
\;\cong\;
H^{n-1}_{\dR}(S)_{d+n-1},
\]
where the de Rham complex is graded by
\(\deg x_i=\deg dx_i=1\).

The right-hand side vanishes unless
\[
d=pq-(n-1)
\qquad\text{for some }q\ge n-1.
\]
Writing \(r=q-n+1\ge0\), one has
\[
H^{n-1}_{\dR}(S)_{pq}
\cong
F_{G/k}^*\!\left(
\Sym^r((V^{(1)})^*)
\otimes
\Lambda^{n-1}((V^{(1)})^*)
\right),
\]
and therefore
\[
\dim_k(\Ld_d/\Hex_d)
=n\binom{q}{n-1}.
\]
\end{theorem}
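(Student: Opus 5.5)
The proof has three parts: an equivariant identification, a Cartier computation, and a dimension count.

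\emph{Identification.} Contraction with \(\operatorname{vol}\) maps \(S_d\otimes V\) isomorphically onto \(\Omega^{n-1}_d(S)\). By \cref{p1:prop:contraction-divergence} it carries \(\Ld_d\) onto the closed forms, and by definition it carries \(\Hex_d\) onto the exact forms. Under the grading \(\deg x_i=\deg dx_i=1\), an \((n-1)\)-form of coefficient degree \(d\) has total degree \(d+n-1\). Hence contraction induces a linear isomorphism
\[
\Ld_d/\Hex_d\;\cong\;H^{n-1}_{\dR}(S)_{d+n-1}.
\]
The equivariance computation at the end of \cref{p1:sec:derham-dictionary} shows that this map becomes \(G\)-equivariant after tensoring the vector-field side with \(\det(V^*)\). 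All maps are morphisms of rational \(G\)-modules, because they are defined over \(k\) by polynomial formulas, so the quotient isomorphism is one of rational modules.

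\emph{Cartier computation.} Let \(e=d+n-1\). If \(e\not\equiv0\pmod p\), the cohomology vanishes by \cref{p1:prop:euler-homotopy}. The same conclusion also follows weightwise from \eqref{p1:eq:closed-weight} and \eqref{p1:eq:exact-weight}: cohomology survives only when \(\lambda=0\), that is, when every multidegree coordinate \(m_i\equiv0\pmod p\).

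Now write \(e=pq\) and use the Cartier isomorphism
\[
C^{-1}:\Omega^{j}_{S^{(1)}}\xrightarrow{\ \sim\ }H^j_{\dR}(S),
\qquad
f^{(1)}\,dy_I\mapsto f^p\,x_I^{p-1}dx_I .
\]
Here \(S^{(1)}=\Sym((V^{(1)})^*)\) has coordinates \(y_i\). The map \(C^{-1}\) multiplies total degree by \(p\) and is natural in linear coordinate changes, so it is \(G\)-equivariant when \(G\) acts on the source through \(F_{G/k}\). Injectivity, surjectivity and well-definedness can be checked by hand via the weight description: in a weight with \(\lambda=0\) the differential is zero, so the cohomology is all of \(\Lambda^\bullet V^*\) on the monomials \(x^{m-\mathbf1_I}dx_I\) with \(p\mid m\). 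Matching \(x^{m-\mathbf1_I}=(x^{m/p-\mathbf1_I})^p x_I^{p-1}\) gives a bijection of bases with \(y^{m/p-\mathbf1_I}dy_I\). This basis check works in all multidegrees, including boundary ones with some \(m_i=0\).

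In form degree \(n-1\), the source in total degree \(q\) is \(\Sym^{q-(n-1)}\otimes\Lambda^{n-1}\) of \((V^{(1)})^*\). It is nonzero only when \(q\ge n-1\), which gives \(r=q-n+1\) and the displayed identification. The expected main obstacle is exactly this naturality: one has to verify that \(C^{-1}\), as a map on cohomology, intertwines the \(G\)-action with the Frobenius-pulled-back action, not merely the torus action. I would first check it for the standard formula \(C^{-1}(g^*\omega)=g^*C^{-1}(\omega)\), where \(g^*\) acts on \(S^{(1)}\) by the Frobenius-twisted matrix. This holds because \(C^{-1}(d y)=x^{p-1}dx\) is compatible with linear substitutions modulo exact forms, since \((\sum a_ix_i)^{p-1}d(\sum a_ix_i)\equiv\sum a_i^p x_i^{p-1}dx_i\) modulo exact forms. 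This is the classical additivity of \(C^{-1}\).

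\emph{Dimension count.} The source has dimension \(\binom{r+n-1}{n-1}\cdot n=n\binom{q}{n-1}\), since \(r+n-1=q\). Since tensoring with \(\det(V^*)\) does not change dimension, this gives
\[
\dim_k(\Ld_d/\Hex_d)=n\binom{q}{n-1}.
\]
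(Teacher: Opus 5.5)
Your proposal is correct and follows the same route as the paper. Both arguments use the contraction dictionary with its \(\det(V^*)\) twist, the inverse Cartier isomorphism \(y^\alpha dy_I\mapsto[x^{p\alpha+(p-1)\mathbf 1_I}dx_I]\) (which multiplies total degree by \(p\)), its naturality to obtain the Frobenius pullback, and the same dimension count. The paper cites the bijectivity and \(\GL(V)\)-naturality of \(C^{-1}\), whereas you check them by hand: bijectivity from the weightwise Koszul description (the differential vanishes exactly when \(p\mid m\)), and naturality from multiplicativity together with the additivity of \(x^{p-1}dx\) modulo exact forms.
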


\begin{proof}
The first isomorphism follows from the divergence-free/de Rham dictionary of
Section~\ref{foundation-divergence-free-fields}, including its
determinant twist.

Let \(X=\operatorname{Spec}S\), let
\(X^{(1)}=X\times_{\operatorname{Spec}k,F_k}\operatorname{Spec}k\), and
let
\[
F_{X/k}:X\longrightarrow X^{(1)}
\]
be relative Frobenius.  On coordinate rings its comorphism sends
\(x_i^{(1)}\) to \(x_i^p\).  The relative inverse Cartier isomorphism is
\[
C^{-1}:\Omega^{n-1}_{X^{(1)}/k}
\xrightarrow{\ \sim\ }
\mathcal H^{n-1}(F_{X/k*}\Omega^\bullet_{X/k}).
\]
It is natural for linear changes of coordinates.  For a multi-index
\(\alpha\in\mathbf Z_{\ge0}^n\) and a subset
\(I\subset\{1,\ldots,n\}\) of cardinality \(n-1\), its coordinate form is
\[
C^{-1}\!\left((x^{(1)})^\alpha dx_I^{(1)}\right)
=
\left[
 x^{p\alpha+(p-1)\mathbf1_I}dx_I
\right],
\]
where the brackets denote the de Rham cohomology class and
\(\mathbf1_I\) is the indicator vector of \(I\).

The source form has total degree
\[
q=|\alpha|+(n-1),
\]
whereas its Cartier representative has total degree
\[
p|\alpha|+(p-1)(n-1)+(n-1)=pq.
\]
Hence
\[
H^{n-1}_{\dR}(S)_m=0
\quad\text{unless}\quad
m=pq\text{ with }q\ge n-1.
\]
For \(m=pq\), the degree-\(q\) component of
\(\Omega^{n-1}_{X^{(1)}/k}\) is
\[
\Sym^{q-n+1}((V^{(1)})^*)
\otimes
\Lambda^{n-1}((V^{(1)})^*).
\]
Cartier naturality identifies its image with the pullback of this
\(G^{(1)}\)-module along \(F_{G/k}\).  Since the form degree corresponding
to a coefficient-degree-\(d\) vector field is \(d+n-1\), the grading
support condition is
\[
d+n-1=pq,
\qquad\text{equivalently}\qquad
d=pq-(n-1).
\]
Finally, with \(r=q-n+1\),
\[
\dim\Sym^r((V^{(1)})^*)=\binom{q}{n-1},
\qquad
\dim\Lambda^{n-1}((V^{(1)})^*)=n,
\]
which gives the stated dimension.
\end{proof}

Removing the determinant line introduced by contraction gives the following
equivalent formulation.

\begin{corollary}[Cartier coefficient modules]
\label{p1:cor:cartier-coefficient-module}
Put
\[
d_r=(p-1)(n-1)+pr,
\qquad
W_r=\Sym^r(V^*)\otimes V,
\qquad
\delta_1=(\det V^*)^{\otimes(p-1)}.
\]
Define the first Frobenius pullback of \(W_r\) by
\[
W_r^{(1)}
:=
F_{G/k}^*\!\left(
\Sym^r((V^{(1)})^*)\otimes V^{(1)}
\right).
\]
Then
\[
{
\Ld_{d_r}/\Hex_{d_r}
\cong
\delta_1\otimes W_r^{(1)}
}
\qquad(r\ge0)
\]
as rational \(G\)-modules, and \(\Ld_d/\Hex_d=0\) in every other degree
\(d\ge2\).
\end{corollary}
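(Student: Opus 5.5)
The corollary will follow from \cref{p1:thm:module} by reindexing the degrees and removing a determinant twist. For the degrees, the support condition \(d=pq-(n-1)\) with \(q\ge n-1\) becomes, after substituting \(q=r+n-1\),
\[
d=p(r+n-1)-(n-1)=(p-1)(n-1)+pr=d_r .
\]
So \(\Ld_d/\Hex_d=0\) for every other \(d\ge2\), and in degree \(d_r\) the theorem gives
\[
(\Ld_{d_r}/\Hex_{d_r})\otimes\det(V^*)\cong F_{G/k}^*\!\bigl(\Sym^r((V^{(1)})^*)\otimes\Lambda^{n-1}((V^{(1)})^*)\bigr).
\]

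The key step is to rewrite \(\Lambda^{n-1}\) on the Frobenius side. For any \(n\)-dimensional space \(U\), the wedge pairing \(\Lambda^{n-1}U^*\otimes U^*\to\Lambda^nU^*\) is perfect and \(\GL(U)\)-equivariant. It therefore gives a natural isomorphism
\[
\Lambda^{n-1}U^*\cong U\otimes\det U^*,
\]
which can also be written as contraction \(U\otimes\Lambda^nU^*\to\Lambda^{n-1}U^*\). I will apply this with \(U=V^{(1)}\). Since \(F_{G/k}^*\) is a tensor functor, this yields
\[
F_{G/k}^*\bigl(\Sym^r((V^{(1)})^*)\otimes V^{(1)}\bigr)\otimes F_{G/k}^*\det((V^{(1)})^*)=W_r^{(1)}\otimes F_{G/k}^*\det((V^{(1)})^*).
\]

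Next I need the identification \(F_{G/k}^*\det((V^{(1)})^*)\cong(\det V^*)^{\otimes p}\). The character \(\det\) of \(G^{(1)}\) composed with relative Frobenius is the \(p\)-th power of \(\det\) on \(G\): on points \(g\mapsto g^{(p)}\), the matrix with entries raised to the \(p\)-th power, whose determinant is \(\det(g)^p\). This holds as morphisms of schemes, so no perfectness of \(k\) is needed.

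Finally, tensor both sides with \(\det V\), which is the inverse of \(\det(V^*)\). This cancels the twist on the left and leaves
\[
(\det V^*)^{\otimes p}\otimes\det V\cong(\det V^*)^{\otimes(p-1)}=\delta_1
\]
on the right. That gives \(\Ld_{d_r}/\Hex_{d_r}\cong\delta_1\otimes W_r^{(1)}\).

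The only real care needed is keeping the sign conventions of the determinant consistent. The action \((g\cdot f)(v)=f(g^{-1}v)\) makes \(V^*\) carry the contragredient action, and the dictionary in Section~\ref{p1:sec:derham-dictionary} twists by \(\det(V^*)\), not \(\det V\). I would check the final exponent \(p-1\) on the torus, using the top weight vector of the Cartier class \(x^{(p-1)\mathbf 1_I}dx_I\) with \(r=0\). Compute its torus character on the vector-field side and compare it with the character of \(\delta_1\otimes V^{(1)}\). This confirms that the net exponent is \(p-1\) and not \(p+1\) or \(1-p\).
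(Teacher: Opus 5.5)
Your proposal is correct and follows essentially the same route as the paper: you reindex the support condition of \cref{p1:thm:module} via $q=r+n-1$, rewrite $\Lambda^{n-1}((V^{(1)})^*)\cong V^{(1)}\otimes\det((V^{(1)})^*)$, use $F_{G/k}^*\det((V^{(1)})^*)\cong(\det V^*)^{\otimes p}$, and cancel one copy of $\det V^*$. Your torus check on the class of $x^{(p-1)\mathbf 1_I}dx_I$ does give the exponent $p-1$, so this extra verification agrees with the paper's sign conventions.
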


\begin{proof}
For every \(n\)-dimensional vector space \(U\), contraction with a top
exterior form gives the canonical isomorphism
\[
\Lambda^{n-1}(U^*)\cong U\otimes\det(U^*).
\]
Apply this with \(U=V^{(1)}\).  Pullback along relative Frobenius gives
\[
F_{G/k}^*\det((V^{(1)})^*)
\cong
(\det V^*)^{\otimes p},
\]
because the determinant character is sent to its \(p\)-th power.  Hence
Theorem~\ref{p1:thm:module} gives
\[
(\Ld_{d_r}/\Hex_{d_r})\otimes\det(V^*)
\cong
W_r^{(1)}\otimes(\det V^*)^{\otimes p}.
\]
Cancelling the determinant line yields the displayed formula.
\end{proof}

\begin{remark}[Specialization to the finite linear group]
When \(k=\mathbf F_p\) and the rational representation is restricted to
\(\GL_n(\mathbf F_p)\), the character \(\delta_1\) is trivial because
\(a^{p-1}=1\) for \(a\in\mathbf F_p^\times\).  The Frobenius-twisted and
untwisted matrices of \(W_r\) also agree on \(\GL_n(\mathbf F_p)\).
\end{remark}

\subsection{Hilbert series and homogeneous generators}

\begin{corollary}[Hilbert series]
\label{p1:cor:hilbert}
Except when \((n,p)=(3,2)\), the Hilbert series of
\(\Ld_{\ge2}^{\mathrm{ab}}\), graded by coefficient degree, is
\[
h_{n,p}(t)
=
\frac{n(n-1)(n+2)}{2}\,t^2
+n\,\frac{t^{(p-1)(n-1)}}{(1-t^p)^n}.
\]
For \((n,p)=(3,2)\),
\[
h_{3,2}(t)
=12t^2+\frac{3t^2}{(1-t^2)^3}.
\]
\end{corollary}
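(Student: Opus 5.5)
The plan is to read off the Hilbert series degree by degree from the description of the abelianization already obtained. By \cref{p1:thm:main} and the discussion opening \cref{foundation-cartier-abelianization}, \((\Ld_{\ge2}^{\mathrm{ab}})_2=\Ld_2\) and \((\Ld_{\ge2}^{\mathrm{ab}})_d=\Ld_d/\Hex_d\) for \(d\ge3\). For \(d\ge3\), \cref{p1:cor:cartier-coefficient-module} shows that this quotient vanishes unless \(d=d_r\), and that \(\dim(\Ld_{d_r}/\Hex_{d_r})=n\binom{r+n-1}{n-1}\). Here I use \(\dim W_r=n\binom{r+n-1}{n-1}\); Frobenius pullback and twisting by a character do not change dimension. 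So the Hilbert series is \(\dim\Ld_2\,t^2\) plus the sum of these dimensions over those \(r\) with \(d_r\ge3\).

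First I compute \(\dim\Ld_2\) over an arbitrary field. The divergence map \(S_2\otimes V\to S_1\) is surjective in every characteristic. Indeed, since \(n\ge2\), for each \(i\) I can choose \(j\ne i\), and then \(\operatorname{div}(x_ix_j\partial_j)=x_i\) with coefficient \(1\). I deliberately avoid \(x_i^2\partial_i\), which fails in characteristic two. Hence
\[
\dim\Ld_2=n\cdot\frac{n(n+1)}{2}-n=\frac{n(n-1)(n+2)}{2}.
\]

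Next I sum the Cartier contributions. The standard identity \(\sum_{r\ge0}\binom{r+n-1}{n-1}u^r=(1-u)^{-n}\) with \(u=t^p\) gives
\[
\sum_{r\ge0}n\binom{r+n-1}{n-1}t^{(p-1)(n-1)+pr}=n\,\frac{t^{(p-1)(n-1)}}{(1-t^p)^n}.
\]
This is legitimate exactly when every \(d_r\) is at least \(3\), i.e.\ when \((p-1)(n-1)\ge3\). Since \(n\ge3\), we have \((p-1)(n-1)\ge2\), with equality only for \((n,p)=(3,2)\). Outside that case the two pieces add up to the stated \(h_{n,p}\).

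The only delicate step is the exceptional case \((n,p)=(3,2)\), where \(d_0=2\). In degree two the abelianization is all of \(\Ld_2\), of dimension \(15\), rather than \(\Ld_2\) plus a Cartier contribution. So the \(r=0\) term \(3t^2\) must be removed from the geometric series, which gives
\[
15t^2+\Bigl(\frac{3t^2}{(1-t^2)^3}-3t^2\Bigr)=12t^2+\frac{3t^2}{(1-t^2)^3}.
\]
I expect no real obstacle beyond this bookkeeping. The main check is to use the Cartier description only in degrees \(d\ge3\), where \cref{p1:thm:main} identifies the derived algebra with \(\Hex_d\).
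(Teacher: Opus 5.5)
Your proposal is correct and follows essentially the same route as the paper. You compute $\dim\Ld_2$ from the surjectivity of $\operatorname{div}\colon S_2\otimes V\to S_1$ (using $x_ix_j\partial_j$ instead of the paper's $x_ix_j\partial_i$, which makes no difference), sum the Cartier dimensions $n\binom{r+n-1}{n-1}$ in degrees $d_r$, and handle $(n,p)=(3,2)$ by replacing the three-dimensional $r=0$ term in degree two with all of $\Ld_2$, exactly as the paper does.
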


\begin{proof}
The divergence map \(S_2\otimes V\to S_1\) is surjective: for every
\(j\), choose \(i\ne j\) and observe that
\[
\operatorname{div}(x_ix_j\partial_i)=x_j.
\]
Therefore
\[
\dim\Ld_2
=n\binom{n+1}{2}-n
=\frac{n(n-1)(n+2)}2.
\]

Write
\[
d_r=(p-1)(n-1)+pr.
\]
By the preceding corollary, the Cartier quotient in degree \(d_r\) has
dimension
\[
n\binom{r+n-1}{n-1}.
\]
Thus its generating series is
\[
\sum_{r\ge0}n\binom{r+n-1}{n-1}t^{d_r}
=
n\,\frac{t^{(p-1)(n-1)}}{(1-t^p)^n}.
\]
For \(n\ge3\), the initial Cartier degree
\[
d_0=(p-1)(n-1)
\]
is at least two, and \(d_0=2\) occurs exactly for \((n,p)=(3,2)\).  If
\(d_0\ge3\), the Cartier series is disjoint from the degree-two
abelianization \(\Ld_2\), giving the first formula.

For \((n,p)=(3,2)\), the Cartier series begins with a three-dimensional
term in degree two, namely \(\Ld_2/\Hex_2\), whereas the degree-two
abelianization is the whole \(\Ld_2\), of dimension fifteen, because no
bracket in \(\Ld_{\ge2}\) has degree two.  Replacing that
three-dimensional term by the fifteen-dimensional degree-two piece adds
\(12t^2\) and gives the second formula.
\end{proof}

\begin{corollary}[Minimal homogeneous generators]
\label{p1:cor:sharp}
Let \(M\subset\Ld_{\ge2}\) be a homogeneous subspace whose Lie closure is
\(\Ld_{\ge2}\).  Then, for every \(d\), the composite
\[
M_d\longrightarrow\Ld_d
\longrightarrow(\Ld_{\ge2}^{\mathrm{ab}})_d
\]
is surjective; in particular
\(\dim M_d\ge\dim(\Ld_{\ge2}^{\mathrm{ab}})_d\).

These lower bounds are sharp.  One may take
\(M_2=\Ld_2\) and, for each Cartier degree
\(d=pq-(n-1)\ge3\), choose any vector-space complement
\[
\Ld_d=\Hex_d\oplus C_d
\]
and include \(C_d\) in \(M\); no additional homogeneous generators are
needed.
\end{corollary}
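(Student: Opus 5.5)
The plan has two halves: the lower bound, which is a formal fact about graded Lie algebras generated in positive degrees, and sharpness, which uses \cref{p1:thm:main} to show that only the displayed complements are needed.

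\textbf{Lower bound.} Write \(\mathfrak g=\Ld_{\ge2}\) and let \(\mathfrak h\) be the Lie subalgebra generated by \(M\). Every element of \(\mathfrak h\) is a linear combination of iterated brackets of homogeneous elements of \(M\). An iterated bracket of length at least two lies in \([\mathfrak g,\mathfrak g]\), so \(\mathfrak h\subseteq M+[\mathfrak g,\mathfrak g]\). Both summands are homogeneous, so taking degree-\(d\) parts of \(\mathfrak h=\mathfrak g\) gives
\[
\Ld_d=M_d+[\mathfrak g,\mathfrak g]_d .
\]
This is exactly surjectivity of \(M_d\to(\Ld_{\ge2}^{\mathrm{ab}})_d\), and the dimension bound follows.

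\textbf{Sharpness.} Let \(M=\Ld_2\oplus\bigoplus C_d\), with the sum over the Cartier degrees \(d\ge3\), and let \(\mathfrak h\) be its Lie closure. I will prove \(\Ld_d\subseteq\mathfrak h\) by induction on \(d\ge2\).

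\begin{itemize}
\item The case \(d=2\) holds because \(\Ld_2\subseteq M\).
\item For \(d\ge3\), \cref{p1:thm:main} gives \(\Hex_d=[\Ld_2,\Ld_{d-1}]\). By induction \(\Ld_{d-1}\subseteq\mathfrak h\), and \(\Ld_2\subseteq\mathfrak h\), so \(\Hex_d\subseteq\mathfrak h\).
\item If \(d\) is not a Cartier degree, \(\Ld_d=\Hex_d\) by \cref{p1:thm:module} (or \cref{p1:cor:cartier-coefficient-module}). In characteristic zero this follows from \cref{p1:prop:euler-homotopy}, since \(d+n-1\neq0\) there.
\item If \(d\) is a Cartier degree, \(\Ld_d=\Hex_d\oplus C_d\subseteq\mathfrak h\).
\end{itemize}

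With this \(M\), \(\dim M_d\) equals \(\dim(\Ld_{\ge2}^{\mathrm{ab}})_d\) in every degree, using \((\Ld_{\ge2}^{\mathrm{ab}})_2=\Ld_2\) and \((\Ld_{\ge2}^{\mathrm{ab}})_d=\Ld_d/\Hex_d\) for \(d\ge3\). So the bounds are attained.

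The only delicate point is the degree-two bookkeeping in the case \((n,p)=(3,2)\). There \(d=2\) is a Cartier degree, but it is already covered by taking \(M_2=\Ld_2\). This is why the statement restricts the complements \(C_d\) to \(d\ge3\).

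Everything else is formal once \cref{p1:thm:main} is available. The substantive input is the inductive step, which needs brackets with \(\Ld_2\) to reach all of \(\Hex_d\) from \(\Ld_{d-1}\) alone, without any new generators in degree \(d\).
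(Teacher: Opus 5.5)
Your proof is correct and follows the paper's argument. The lower bound comes from a generating subspace surjecting onto the abelianization, which you make explicit via \(\mathfrak h\subseteq M+[\mathfrak g,\mathfrak g]\). Sharpness goes by induction on degree, using \cref{p1:thm:main} to obtain \(\Hex_d\) from \(\Ld_2\) and \(\Ld_{d-1}\), with the complements \(C_d\) supplying the Cartier quotients. Your appeal to \cref{p1:prop:euler-homotopy} in characteristic zero covers a case the paper's proof handles only implicitly, since \cref{p1:thm:module} is stated for \(\operatorname{char}k=p>0\).
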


\begin{proof}
Every generating subspace maps onto the abelianization, which proves the
lower bound degree by degree.

For sharpness, proceed by induction on coefficient degree.  Degree two is
present by construction.  Suppose all layers below degree \(d\) have
already been generated.  Then Theorem~\ref{p1:thm:main} gives
\[
\Hex_d=[\Ld_2,\Ld_{d-1}],
\]
so the exact subspace is generated from earlier layers.  If \(d\) is
not a Cartier degree, Theorem~\ref{p1:thm:module} gives
\(\Ld_d=\Hex_d\).  At a Cartier degree, the chosen complement \(C_d\)
supplies the remaining quotient \(\Ld_d/\Hex_d\).  The induction therefore
generates every homogeneous layer.
\end{proof}

\subsection{Restricted-source recursion}

In \cref{p1:thm:main} the second argument of the bracket ranges over all
of \(\Ld_{d-1}\).  For \(p\ge5\) it can be taken in the exact subspace
\(\Hex_{d-1}\).

\begin{theorem}[Restricted-source exactness]
\label{p1:thm:restricted-source-exactness}
Assume \(\operatorname{char}k=p\ge5\) and \(n\ge3\).  Then
\[
{
\Hex_d=[\Ld_2,\Hex_{d-1}]
}
\qquad(d\ge3).
\]
\end{theorem}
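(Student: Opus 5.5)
The inclusion \([\Ld_2,\Hex_{d-1}]\subseteq\Hex_d\) is already given by the Cartan calculation at the end of Section~\ref{foundation-koszul-reduction}, since every bracket of divergence-free fields is exact. For the reverse inclusion I would rerun the weightwise proof of \cref{p1:thm:main} with one change: the predecessor coefficient space \(N_r\) is replaced by its exact locus.

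By \eqref{p1:eq:closed-weight}, \eqref{p1:eq:exact-weight} and the boundary discussion of Section~\ref{foundation-divergence-free-fields}, the exact and closed loci at a predecessor weight \(v=w-e_r\) coincide unless \(\lambda_v=0\). Since \(\lambda_v=\lambda-\varepsilon_r\), this happens exactly when \(\lambda=\varepsilon_r\). In that case the exact locus at \(v\) is zero, both for nonnegative \(v\) and for \(v\) with one coordinate equal to \(-1\). Such a bad anchor \(r\) is unique if it exists, because \(\lambda\) determines \(r\). So the whole task is to show that, for every target weight \(w\) with \(\lambda\ne0\), the proof of \cref{p1:thm:main} can be carried out using only anchors \(r\) with \(\lambda\ne\varepsilon_r\).

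\emph{Interior weights.} In the generic (non-coincident) case any single good anchor suffices. If \(\lambda=\varepsilon_r\), I would pass to another anchor \(s\ne r\). Then \(N_s=\ker(\varepsilon_r-\varepsilon_s)\) is a hyperplane. It differs from \(K_s\): otherwise \(\varepsilon_r-\varepsilon_s=c\mu_s\), and evaluating on \(e_t\) with \(t\notin\{r,s\}\) gives \(c=0\), a contradiction. Hence the \(s\)-anchor spans \(\ker\lambda\) by \cref{p1:lem:onto,p1:lem:hyperplanes}. The exceptional configuration \(\lambda=-\sigma\) uses every anchor, but \(-\sigma\ne\varepsilon_r\) because \(n\ge3\), so all anchors there are good.

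\emph{Nonnegative boundary weights (\(w_r=0\)).} Here \(\lambda(e_r)=1\), so \(\lambda=\varepsilon_r\) can occur. Then the map \(T_r\) of \eqref{p1:eq:boundary-anchor-map} has no exact source. Since \(\sum_i w_i\ge2\), I would instead pick \(s\ne r\) with \(w_s\ge1\). The \(s\)-predecessor is nonnegative, \(\lambda\ne\varepsilon_s\), and the non-coincidence computation above applies, so the \(s\)-anchor covers \(\ker\lambda\). When \(\lambda\ne\varepsilon_r\), the characteristic-\(\ne2\) argument via \(T_r\) goes through unchanged; this is where \(p\ne2\) is used.

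\emph{Weights with \(w_s=-1\).} The proof used any \(r\ne s\) with \(w_r>0\). If \(\lambda=\varepsilon_r\), then \(w_j\equiv-1\pmod p\) for all \(j\ne r\). Because \(n\ge3\), some \(j\notin\{r,s\}\) has \(w_j\ge p-1>0\), and I would use that \(j\) instead, since \(\lambda\ne\varepsilon_j\).

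The step I expect to be the main obstacle is checking that nothing else in the case analysis silently used a predecessor with \(\lambda_v=0\), and locating exactly where \(p\ge5\) is needed rather than \(p\ne2\). I would first test \(p=3\) against the low-degree configurations, for instance \(d=3,4\) with \(\lambda\) supported on one or two coordinates, where \(w_j\equiv-1\) forces \(w_j=2\). I would also test the interaction of a bad anchor with coincident anchors, to confirm that the replacement anchor always has a hyperplane \(N_s\ne K_s\). If this bookkeeping closes, the three cases give \(\Hex_d\subseteq[\Ld_2,\Hex_{d-1}]\) at every weight.
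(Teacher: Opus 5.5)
Your proof is correct, and it takes a genuinely different route from the paper's.

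\textbf{The paper's route.} It works with explicit potentials. It spans $\Hex_d$ by the fields $D_{ij}(f)=f_{x_j}\partial_i-f_{x_i}\partial_j$ with monomial $f=x^Ay^Bz^CM$. It brackets these with $D_{xy}(x^ay^b)$, $a+b=3$, which produces the scalar $b(A+1)-a(B+1)$. It then handles the residue class $A\equiv B\equiv-1\pmod p$ and the small cases ($A=0,B=1$; $C\equiv2$; $C=0$) by ad hoc brackets with $z^2\partial_x$, $yz\partial_x$ and $y^2\partial_x$. The hypothesis $p\ge5$ is used there, to invert $2$ and $3$ and to deduce $A\equiv B\equiv-1$ from $3(A+1)=3(B+1)=0$.

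\textbf{Your route.} You rerun the Koszul weight-space argument of the main theorem. Your key observation is right: the closed and exact loci at $v=w-e_r$ differ only when $\lambda-\varepsilon_r=0$. This disqualifies at most one anchor, and in each of the three cases a replacement exists.
\begin{itemize}
\item Interior weights: any $s\ne r$ works, and it is non-coincident by evaluation at $e_t$.
\item Boundary weights: $\lambda=\varepsilon_r$ forces $w_j\equiv-1$, hence $w_j\ge p-1$, for all $j\ne r$.
\item Weights with a negative coordinate at $s$: take a $j\notin\{r,s\}$.
\end{itemize}
The bookkeeping you were worried about does close. No other step of the main proof uses a predecessor with $\lambda_v=0$; in particular, in the configuration $\lambda=-\sigma$ every $\lambda-\varepsilon_r$ is nonzero.

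\textbf{Where $p\ge5$ enters.} It does not enter your argument at all. You use $p\ne2$ only for the isomorphism $T_r$, so your proof also covers $p=3$.

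\textbf{Trade-off.} Your proof is shorter and structural, and it needs a weaker hypothesis. The paper's proof instead gives explicit bracket expressions for each monomial exact generator.
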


\begin{proof}
For distinct coordinates \(i,j\), put
\[
D_{ij}(f)=f_{x_j}\partial_i-f_{x_i}\partial_j.
\]
Under contraction with \(\operatorname{vol}\), the fields \(D_{ij}(f)\)
are the elementary exact fields coming from monomial
\((n-2)\)-form potentials; consequently they span every \(\Hex_d\).
It is therefore enough to treat a monomial potential for one fixed pair of
coordinates.  Rename that pair \(x,y\), choose a third coordinate \(z\),
and write
\[
f=x^Ay^Bz^CM,
\]
where \(M\) is independent of \(x,y,z\).  Scalar expressions involving \(A,B,C\) below are interpreted through \(\mathbf Z\to k\).

For integers \(a,b\ge0\) with \(a+b=3\), whenever all displayed exponents
are nonnegative, a direct calculation gives
\begin{equation}
\label{p1:eq:exact-plane-source}
\begin{aligned}
&[D_{xy}(x^ay^b),
  D_{xy}(x^{A-a+1}y^{B-b+1}z^CM)]\\
&\hspace{35mm}
=\bigl(b(A+1)-a(B+1)\bigr)D_{xy}(f).
\end{aligned}
\end{equation}
The first entry has coefficient degree two, and the second is an exact
field of coefficient degree one less than the target.

\smallskip
\noindent\emph{Cases outside the residue class \(A\equiv B\equiv-1\pmod p\).}
Suppose first that \(A,B\ge1\).  The choices
\((a,b)=(2,1)\) and \((1,2)\) are both admissible.  If both corresponding
scalars vanish, then
\[
(A+1)-2(B+1)=0,
\qquad
2(A+1)-(B+1)=0,
\]
so
\[
3(A+1)=3(B+1)=0.
\]
Because \(p\ge5\), this is equivalent to
\[
A\equiv B\equiv-1\pmod p.
\]
Thus outside that residue class, one of the two brackets in
\eqref{p1:eq:exact-plane-source} produces \(D_{xy}(f)\).

Now suppose \(A=0\).  If \(B=0\), then \(D_{xy}(f)=0\).  If \(B\ge2\),
the admissible choice \((a,b)=(0,3)\) has scalar \(3\ne0\), so again
\eqref{p1:eq:exact-plane-source} applies.  The only remaining case is
\(A=0,B=1\).  The case \(B=0,A=1\) is symmetric.

For \(A=0,B=1\), write
\[
f=yN,
\qquad
D_{xy}(f)=N\partial_x.
\]
The target has coefficient degree \(d\ge3\), hence the monomial \(N\) has
positive degree.  Choose the third coordinate \(z\) from its support and
write
\[
N=z^CM_0,
\qquad
C>0,
\]
with \(M_0\) independent of \(x,y,z\).  Set
\[
U=z^2\partial_x\in\Ld_2,
\qquad
V=D_{yz}\!\left(\frac12 yz^{C-1}M_0\right)\in\Hex_{d-1}.
\]
Explicitly,
\[
V=
-\frac12z^{C-1}M_0\partial_z
+\frac{C-1}{2}yz^{C-2}M_0\partial_y,
\]
where the second summand is absent when \(C=1\).  A direct bracket
calculation gives
\[
[U,V]=N\partial_x.
\]
This proves the remaining case.

\smallskip
\noindent\emph{The residue \(A\equiv B\equiv-1\pmod p\).}
Here \(A,B\ge p-1\ge4\), so all powers of \(x\) and \(y\) appearing below
are nonnegative.  Put
\[
\begin{aligned}
U_1&=z^2\partial_x,
&
V_1&=D_{yz}(x^Ay^Bz^{C-1}M),\\
U_2&=yz\partial_x,
&
V_2&=D_{yz}(x^Ay^{B-1}z^CM).
\end{aligned}
\]
For \(C>0\), these are polynomial fields; when \(C=1\), the derivative
term carrying the factor \(C-1\) is absent.  Using
\(B=-1\) in \(k\), their expanded forms are
\[
\begin{aligned}
V_1&=(C-1)x^Ay^Bz^{C-2}M\partial_y
     +x^Ay^{B-1}z^{C-1}M\partial_z,\\
V_2&=Cx^Ay^{B-1}z^{C-1}M\partial_y
     +2x^Ay^{B-2}z^CM\partial_z.
\end{aligned}
\]
Thus \(V_1,V_2\in\Hex_{d-1}\), while \(U_1,U_2\in\Ld_2\).
If \(C>0\) and \(C-2\ne0\) in \(k\), then
\[
[U_2,V_2]-2[U_1,V_1]
=(C-2)D_{xy}(f),
\]
and division by \(C-2\) gives the target.

If \(C>0\) and \(C\equiv2\pmod p\), set
\[
U_3=z^2\partial_y,
\qquad
V_3=D_{xz}(x^Ay^Bz^{C-1}M).
\]
Since \(A=-1\) in \(k\),
\[
V_3=(C-1)x^Ay^Bz^{C-2}M\partial_x
+x^{A-1}y^Bz^{C-1}M\partial_z,
\]
so again \(V_3\in\Hex_{d-1}\), and direct calculation gives
\[
D_{xy}(f)=[U_1,V_1]-[U_3,V_3].
\]

Finally suppose \(C=0\).  Put
\[
R=x^Ay^{B-2}M\partial_z
=\frac12D_{yz}(x^Ay^{B-1}M)\in\Hex_{d-1}
\]
and
\[
T=D_{yz}(x^Ay^{B-2}zM)
=x^Ay^{B-2}M\partial_y
+3x^Ay^{B-3}zM\partial_z
\in\Hex_{d-1}.
\]
Then
\[
D_{xy}(f)
=
3[zy\partial_x,R]-[y^2\partial_x,T].
\]
All scalar denominators used above are invertible because \(p\ge5\).
This exhausts the monomial exact generators and proves
\[
\Hex_d\subseteq[\Ld_2,\Hex_{d-1}].
\]
The reverse inclusion follows from the Cartan identity of
Section~\ref{foundation-koszul-reduction}, since every bracket of two
divergence-free fields is exact.  Hence equality holds.
\end{proof}

\subsection{The case \texorpdfstring{$n=3$}{n=3}}
\label{p1:sec:n3}

When \(n=3\), the canonical identity
\[
\Lambda^2(V^*)\cong V\otimes\det(V^*)
\]
simplifies the coefficient module.  For
\(d=pq-2\ge3\), put \(r=q-2\ge0\).  Theorem~\ref{p1:thm:module} gives
\[
(\Ld_d/\Hex_d)\otimes\det(V^*)
\cong
F_{G/k}^*\!\left(
\Sym^r((V^{(1)})^*)
\otimes
\Lambda^2((V^{(1)})^*)
\right).
\]
Equivalently, Corollary~\ref{p1:cor:cartier-coefficient-module} gives
\[
\Ld_d/\Hex_d
\cong
(\det V^*)^{\otimes(p-1)}\otimes W_r^{(1)}.
\]
After restriction to \(\SL_3\), all determinant characters disappear, so
\[
(\Ld_d/\Hex_d)|_{\SL_3}
\cong
F_{\SL_3/k}^*\!\left(
\Sym^r((V^{(1)})^*)\otimes V^{(1)}
\right),
\]
and
\[
\dim_k(\Ld_d/\Hex_d)=3\binom{q}{2}.
\]

\section*{Acknowledgments}

The author thanks Helmut Strade and Alexander Premet for helpful
comments, and in particular Alexander Premet for pointing out the
connection with Theorem~2.69 of Benkart, Gregory, and Premet
\citep{BenkartGregoryPremet2009}.

\bibliographystyle{plainnat}

\end{document}